\documentclass[11pt]{article}

\usepackage{amsmath,amsfonts,amsthm,amssymb,blkarray, booktabs}
\usepackage{mathrsfs, graphicx,color,supertabular}
\usepackage{ulem}
\usepackage{cite}
\usepackage[T1]{fontenc}
\newtheorem{thm}{Theorem}[section]
\newtheorem{lem}[thm]{Lemma}

\newtheorem{exa}{Example}

\allowdisplaybreaks

\title{\bf \large All limit points of the largest roots of matching polynomials are determined}

\author{
{\small \ \ Zhaoxi Li$^a$,\ \ Shi-Mei Ma$^a$,\ \ Jianfeng Wang$^{a,}$\footnote{Corresponding author.
\newline{\it \hspace*{5mm}Email addresses:} lzhaoxi@aliyun.com (Z.X. Li), shimeimapapers@163.com (S.-M. Ma), jfwang@aliyun.com (J.F. Wang).}\;, \ \ Weifan Wang$^{a,b}$}\\[2mm]
\footnotesize $^a$School of Mathematics and Statistics, Shandong University of Technology, Zibo 255049, China\\
\footnotesize $^b$Department of Mathematics, Zhejiang Normal University, Jinhua, 321004, China}

\date{ }
\begin{document}
\maketitle
\begin{abstract}
The largest matching root $\mu(G)$ of a graph $G$ is that of its matching polynomial. In this paper, all limit points of the largest matching roots of graphs are determined. More precisely, we identify the limit points of the largest matching roots of graphs less than $\tau^{\frac{1}{2}}+\tau^{-\frac{1}{2}}$. For any $\gamma \geq \tau^{\frac{1}{2}}+\tau^{-\frac{1}{2}}$ with $\tau=\frac{\sqrt{5}+1}{2}$, there exists a graph sequence $\{G_i\, |\, i\in \mathbb{N}\}$ such that $\lim\limits_{i \rightarrow \infty}\mu(G_i)=\gamma$.\\

\noindent {\it AMS classification:} 05C31, 05C70, 05C50 \\[1mm]
\noindent {\it Keywords}: Matching polynomial; Largest matching root; Limit point; Adjacency matrix.
\end{abstract}

\baselineskip=0.22in

\section{Introduction}
In this paper, all graphs considered are simple and undirected. Let $G=(V(G),E(G))$ be a graph with order $n=|V(G)|$ and size $m = |E(G)|$. An edge subset $M \subseteq E(G)$ is called a {\it matching} of $G$ if no two distinct edges in it share a common vertex, and let $\mathcal{M}(G)$ be the set of matchings of $G$. As a celebrated graph polynomial, the {\it matching polynomial} of $G$ is defined as
$$\mathscr{M}(G,x)=\sum_{M\in \mathcal{M}(G)}(-1)^{|M|}x^{|V(G)\backslash V(M)|}.$$
In early period,  the matching polynomial was investigated as a mathematical model in statistical physics \cite{gru-kun,hei-lie,kun} and theoretical chemistry \cite{Aih,gut-mil-tri1, gut-mil-tri} as well as appeared in the first mathematical literature \cite{far}. Since then, the study of matching polynomials has garnered considerable attention. For its combinatorial properties, one of amazing results is that infinitely many bipartite Ramanujan graphs were constructed via matching polynomial\cite{mar-spi-sri}. For more results, see the monographs \cite{god-book,shi-GP} and some old and new papers \cite[e.g.]{bal,bur-erey,chen-yuan,chen-yuan1,god-gut,Gutman1,ku-chen,yan-yeh}.

The {\it matching roots} of a graph $G$ are those of $\mathscr{M}(G,x)$. It has been proved that all the matching roots of $G$ are real and lie in the interval $(-2\sqrt{\Delta-1},2\sqrt{\Delta-1})$ with $\Delta = \Delta(G)$ being the maximum degree of $G$ \cite{hei-lie}. Since the roots of graph polynomials are closely related to the structures of graphs, of particular interest here is the largest matching root of $G$, denoted by $\mu(G)$. Let $A(G)=(a_{ij})$ be the {\it adjacency matrix} of $G$ with $a_{ij} =1$ if $v_iv_j \in E(G)$ and othewise $a_{ij}=0$. The {\it characteristic polynomial} of $G$ is denoted by  $\phi(G,\lambda) = \det(\lambda I - A(G))$. As usual, the {\it spectral radius} $\rho_A(G)$ of $G$ is the largest absolute value of all roots of $\phi(G,\lambda)$. It is well-known that if $G$ is connected, then $\mu(G)$ is simple and non-negative \cite{god-book,hei-lie}, and $\mathscr{M}(G,x) = \phi(G,\lambda)$ if and only if $G$ is a tree \cite[eg.]{god-gut-JGT}. Thereby, for a  tree,
\begin{equation}\label{mu=rho}
	\mu(G) = \rho_A(G).
\end{equation}

A real number $\gamma$ is called a {\it limit point} of the largest matching roots of graphs if there exists a sequence of graphs $\{G_i\, |\, i\in \mathbb{N}\}$ such that  $$\mu(G_i) \neq \mu(G_j) \quad \text{whenever  $i \neq j$}, \quad \text{and}  \quad \lim_{i \rightarrow \infty}\mu(G_i)=\gamma.$$
Recall, the research about this aspect originated from A.J. Hoffman \cite{Ho A J}, who detected the limit points of spectral radii of nonnegative symmetric integral matrices. Using Hoffman's results, Jiang and Polyanskii \cite{jiang-poly} and Jiang et al \cite{jiang-tid-yao-zhang} had attacked a longstanding problem on equiangular lines. We refer the readers to a quite recent survey \cite{3w-2b} in this direction.

In the paper, we determine all limit points of the largest matching roots of graphs in the following theorem, which enriches the theory of matching polynomials.

\begin{thm}\label{main}
Set $\tau=\frac{\sqrt{5}+1}{2}$. Let $\alpha_{n}=\beta_{n}^{\frac{1}{2}}+\beta_{n}^{-\frac{1}{2}}$ with $\beta_{n}$ being the largest positive root of $h_{n}(x)=x^{n+1}-(1+x+\cdots+x^{n-1})$. Then
$$\{\alpha_{1},\alpha_{2},\ldots\}\cup [\tau^{\frac{1}{2}}+\tau^{-\frac{1}{2}},+\infty)$$
constitutes all limit points of the largest matching roots of graphs over $\mathbb{R}$, where $2=\alpha_{1}<\alpha_{2}<\cdots$ and $\lim\limits_{n \to \infty}\alpha_{n}=\tau^{\frac{1}{2}}+\tau^{-\frac{1}{2}} = \sqrt{2+\sqrt{5}}$.
\end{thm}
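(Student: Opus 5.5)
The plan is to reduce everything to trees via the path-tree (Godsil's theorem): for a connected graph $G$ and any vertex $u$, $\mu(G)=\mu(T(G,u))=\rho_A(T(G,u))$, where $T(G,u)$ is the path-tree rooted at $u$, and the last equality is (\ref{mu=rho}). The path-tree is finite but may be much larger than $G$. For disconnected $G$, $\mu(G)$ is the maximum over its components. Hence the set of largest matching roots of graphs coincides with the set of spectral radii of trees, since every tree is a graph and every $\mu(G)$ is the spectral radius of a tree. So it suffices to determine the limit points of spectral radii of trees. This is a sub-problem of Hoffman's setting, because spectral radii of trees are spectral radii of $0/1$ symmetric matrices.

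\textbf{Below $\sqrt{2+\sqrt5}$.} This follows Hoffman's classical analysis. Hoffman showed that the limit points of spectral radii of graphs below $\sqrt{2+\sqrt5}$ are exactly $\alpha_n=\beta_n^{1/2}+\beta_n^{-1/2}$, and the extremal sequences realizing them are trees: paths give $\alpha_1=2$, and the graphs obtained by attaching paths to the ends of a star-like configuration give $\alpha_n$. Concretely, I would take the tree $T_{n,k}$ formed from a path $P_{n}$ with a pendant path of length tending to infinity and suitable end pieces. Then I would show that $\rho(T_{n,k})\to\alpha_n$ strictly monotonically. To do this, compute the characteristic polynomial by the recurrence $\phi(T)=\lambda\phi(T-v)-\phi(T-v-w)$ along a pendant path, substitute $\lambda=t^{1/2}+t^{-1/2}$, and observe that the limiting equation becomes $h_n(t)=0$.

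For the converse, I need that any tree with spectral radius in $(2,\sqrt{2+\sqrt5})$ has one of the restricted shapes classified by Brouwer–Neumaier / Cvetković–Doob–Gutman: $T(1,b,c)$ or the trees $Q$ with two branch vertices. The same classification shows that the only accumulation points of the spectral radii of those shapes are the $\alpha_n$, and that $2=\alpha_1<\alpha_2<\cdots\to\sqrt{2+\sqrt5}$. For trees the restriction to graphs here is harmless, because the extremal sequences are trees and the set we study is a subset of the spectral radii of all graphs. Hence Hoffman's list contains all possible limit points below $\sqrt{2+\sqrt5}$, and each $\alpha_n$ is attained by trees. The strictness $\mu(G_i)\ne\mu(G_j)$ follows from strict monotonicity of the spectral radius under proper subgraph inclusion in connected graphs.

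\textbf{Every $\gamma\ge\sqrt{2+\sqrt5}$ is a limit point.} This is the main obstacle. Shearer proved that every real $\gamma\ge\sqrt{2+\sqrt5}$ is a limit point of spectral radii of graphs, and his construction uses caterpillars, which are trees. I would adapt Shearer's argument directly.

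1. Given $\gamma$, build a caterpillar with spine $v_1v_2\cdots v_k$ and $a_i$ pendant vertices at $v_i$.
2. Choose the $a_i$ greedily. At each step take the largest $a_i$ that keeps the spectral radius of the current caterpillar below $\gamma$; this is controlled through the eigenvalue equation written as a continued fraction $\lambda-a_i/\lambda-1/(\lambda-\cdots)$.
3. Show that the resulting spectral radii increase strictly and converge to $\gamma$.

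The delicate point is proving convergence to $\gamma$ rather than to something smaller. This needs the threshold $\sqrt{2+\sqrt5}$: it is precisely where the continued fraction with all $a_i\in\{0,1\}$ can be kept from collapsing. I would reproduce Shearer's estimate that the error decays geometrically.

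Combining the two parts with the tree reduction gives exactly $\{\alpha_n\}\cup[\tau^{1/2}+\tau^{-1/2},\infty)$.
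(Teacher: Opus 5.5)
Your proposal is correct, but below $\sqrt{2+\sqrt5}$ it takes a genuinely different route from the paper.

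You use three facts: Godsil's identity $\mu(G)=\rho_A(T(G,u))$ for connected $G$, the fact that $\mu$ of a disconnected graph is the maximum over its components, and $\mu(T)=\rho_A(T)$ for trees. Together these show that the \emph{set of values} $\{\mu(G)\}$ equals $\{\rho_A(T): T \text{ a tree}\}$. Since being a limit point depends only on the value set, the theorem becomes a statement about spectral radii of trees. You then read it off from Hoffman's theorem (tree radii are a subset of graph radii, and Hoffman's realizing sequences are trees) and from Shearer's caterpillars.

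The paper never makes this global reduction; it argues sequence by sequence. It first proves by hand that a sequence with limit below $\sqrt{2+\sqrt5}$ must eventually consist of trees with $\Delta=3$, using the graphs $S_n$, the lollipops $H_{g,t}$ and its new subdivision lemma for $\mu$. It then restricts these trees to $T_{1,n,m_i}$ via the exchange lemma and the limits for $T_{1,i,i}$ and $T_{2,2,i}$. Finally it computes $\alpha_n$ from $\tfrac{x+\sqrt{x^2-4}}{2}\mathscr{M}(P_{n+2},x)-x\,\mathscr{M}(P_n,x)=0$.

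What each route buys: yours is much shorter. It also makes rigorous the paper's closing remark that the lower range follows from Hoffman via $\mu=\rho_A$ on trees; that identity alone does not exclude non-tree sequences, whereas the path-tree argument does. However, your route is a pure transfer of Hoffman's result, while the paper re-derives the structural restriction inside matching-polynomial theory.

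Two small corrections:
\begin{itemize}
\item Name the realizing trees concretely as $T_{1,n,m}$ with $m\to\infty$, rather than ``a path with suitable end pieces.'' With that choice, your substitution $\lambda=t^{1/2}+t^{-1/2}$ reduces the limiting equation to $h_n(t)=0$, exactly as in Section 4 of the paper.
\item The range $\gamma\ge\sqrt{2+\sqrt5}$ is not the main obstacle. Shearer's sequences are caterpillars, hence trees, so citing him suffices, as the paper does; you do not need to reproduce his estimates.
\end{itemize}
Also, your version of the Brouwer--Neumaier list omits $T(2,2,c)$. Its radii accumulate only at $\sqrt{2+\sqrt5}$, so nothing changes.
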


In Section 2, we first prove that any real number $\gamma \geq \tau^{\frac{1}{2}}+\tau^{-\frac{1}{2}}$ is a limit point of the largest matching roots of graphs. To determine all limit points of the largest matching roots of graphs smaller than $\tau^{\frac{1}{2}}+\tau^{-\frac{1}{2}}$, we show that each  graph in $\{G_i | i \in \mathbb{N}\}$ must be a tree with maximum degree $\Delta(G_i) =3$. In Section 3, we further analyse the concrete structure of such trees in the sequence. In Section 4, we complete the  algebraic computation to establish the formula given in Theorem \ref{main}.

\section{Reduction to trees}

To prove the main result, we will see that the required graph sequence consists of trees. In particular, the second part of Theorem 1.1 relies on a celebrated result of Shearer [31].

\begin{thm}\label{ii}
 Any real number $\gamma \geq \tau^{\frac{1}{2}}+\tau^{-\frac{1}{2}}$ is a limit point of the largest matching roots of graphs.
\end{thm}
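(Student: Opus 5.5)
This follows from Shearer's theorem together with the identity \eqref{mu=rho}. Shearer proved that every real number $\gamma \geq \sqrt{2+\sqrt{5}} = \tau^{\frac{1}{2}}+\tau^{-\frac{1}{2}}$ is a limit point of spectral radii of graphs. More precisely, his construction shows that the graphs can be taken to be \emph{trees}, namely caterpillars. Such a caterpillar is a path $v_1v_2\cdots v_k$ with $a_j$ pendant vertices attached at $v_j$, and the sequence $(a_1,\dots,a_k)$ is chosen greedily. Each $a_j$ is taken as large as possible subject to keeping the spectral radius below $\gamma$. The resulting caterpillars $T_k$ have strictly increasing spectral radii $\rho_A(T_k)$ converging to $\gamma$.

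The plan is as follows.

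\textbf{Step 1: invoke Shearer's construction.} Fix $\gamma \geq \tau^{\frac{1}{2}}+\tau^{-\frac{1}{2}}$. From Shearer's construction, take a sequence of trees $\{T_i\}$ with $\rho_A(T_i)\to\gamma$. If the version of Shearer's theorem being quoted only asserts ``graphs'', I would reopen his proof and check that the graphs produced are caterpillars. This is the one point needing care: a limit point of spectral radii of arbitrary graphs need not transfer directly, since $\mu(G)\le\rho_A(G)$ with strict inequality unless $G$ is a forest.

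\textbf{Step 2: transfer to matching roots.} By \eqref{mu=rho}, we have $\mu(T_i)=\rho_A(T_i)$ for each tree, so $\mu(T_i)\to\gamma$.

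\textbf{Step 3: distinctness.} The definition of a limit point requires $\mu(G_i)\neq\mu(G_j)$ for $i\neq j$. Shearer's greedy sequence is monotone with limit $\gamma$, so only finitely many terms can share a value. If equalities do occur, we pass to a subsequence of pairwise distinct values. If instead $\rho_A(T_i)=\gamma$ for all large $i$, the sequence would be eventually constant. This would contradict Shearer's statement that $\gamma$ is a genuine limit point, i.e.\ the limit of distinct values. Taking $G_i=T_i$ then gives the required sequence.

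The main obstacle is Step 1: confirming that Shearer's extremal graphs are trees (caterpillars), so that \eqref{mu=rho} applies. Everything else is immediate.
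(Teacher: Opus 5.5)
Your proposal is correct and follows the paper's proof exactly: take Shearer's caterpillar sequence and transfer it to matching roots via $\mu(T)=\rho_A(T)$ for trees. Your Step 3 is more roundabout than needed, and its appeal to ``Shearer's statement'' is slightly circular. Shearer's caterpillars are properly nested connected trees, so Lemma \ref{largest root}(i) already makes their spectral radii strictly increasing, hence pairwise distinct.
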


\begin{proof}
For any real number $\gamma \geq \tau^{\frac{1}{2}}+\tau^{-\frac{1}{2}}$, Shearer \cite{she} constructed a tree sequence to verify this theorem for adjacency spectral radii $\rho_A(G)$. Thereby, it also fits for the largest matching roots  by \eqref{mu=rho}. In fact, such a sequence of trees is formed from the so-called caterpillars in which the removal of all pendant vertices results in a path.	
\end{proof}

To determine all limit points of the largest matching roots of graphs smaller than $\tau^{\frac{1}{2}}+\tau^{-\frac{1}{2}}$, we introduce some basic lemmas and useful results.

For a connected graph $G$ and $v\in V(G)$, Godsil \cite{C.D.Godsil2} defined the \textit{path-tree} $T(G,v)$ of $G$ to be a tree which has as vertices the paths in $G$ that start at $v$ and where two such paths are adjacent if one is a maximal proper sub-path of the other. We now present an example of a path-tree with respect to a graph. Let $P_n$ and $C_n$ be respectively the {\it path} and {\it cycle} with order $n$.
\begin{exa}\normalfont
 Consider the path-tree $T(C_{4},1)$ of $C_{4}$. Note that $$V\left(T(C_{4},1)\right)=\{1,12,123,1234,14,143,1432\}.$$ Then $T(C_{4},1)=P_{7}$, which is shown in Fig. 1.
	
 \setlength{\unitlength}{1mm} \linethickness{1pt}	
	\begin{center}
		\begin{picture}(180,50)(2,-10)
			\multiput(30,20)(20,0){2}{\circle*{1.5}}
			\multiput(30,40)(20,0){2}{\circle*{1.5}}
			
			\put(30,20){\line(3,0){20}}\put(30,20){\line(0,3){20}}
			\put(30,40){\line(3,0){20}}\put(50,20){\line(0,3){20}}	
			\put(26,40){$1$} \put(52,40){$2$}
			\put(26,18){$4$} \put(52,18){$3$}
			\put(39,12){\large$C_{4}$}
			\put(68,33){\large$T(C_{4},1)$}
			\put(65,30){\vector(1,0){20}}
			
			\put(115,40){\circle*{1.5}}
			\multiput(110,33.4)(10,0){2}{\circle*{1.5}}
			\multiput(105,26.6)(20,0){2}{\circle*{1.5}}
			\multiput(100,20)(30,0){2}{\circle*{1.5}}
			
			\put(100,20){\line(3,4){15}}
			\put(130,20){\line(-3,4){15}}
			\put(89,22){ $1234$ }
			\put(96,28){ $123$ }
			\put(102,33){ $12$ }
			\put(115,41){  $1$ }
			\put(129,22){ $1432$ }
			\put(125,28){ $143$ }
			\put(121,33){ $14$ }
			\put(115,12){\large$P_{7}$}
			
			\put(50,5){{\bf Fig. 1:} $C_{4}$ and its path-tree $T(C_{4},1)$.}		
		\end{picture}
	\end{center}
\end{exa}
\vspace{-16mm}

Let $H\subset G$ denote that $H$ is a proper subgraph of a graph $G$.

\begin{lem}\label{largest root}
Let $G$ be a connected graph and $H\subset G$. Then
\begin{itemize}
\item[{\rm (i)}]{\rm\cite{book}}
$\;\rho_{A}(H)<\rho_{A}(G)$.
\item[{\rm (ii)}]{\rm\cite{C.D.Godsil2,hei-lie}}
$\mu(H)<\mu(G)<2\sqrt{\Delta(G)-1} \; \mbox{and} \; \rho_{A}(T(G,v))=\mu(G).$
\end{itemize}	
\end{lem}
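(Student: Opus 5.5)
The lemma makes three assertions: (i) strict monotonicity of the spectral radius under taking proper subgraphs of a connected graph; (ii) strict monotonicity of $\mu$, the bound $\mu(G)<2\sqrt{\Delta(G)-1}$, and Godsil's identity $\rho_A(T(G,v))=\mu(G)$. All are classical and cited, but the plan is to derive (ii) from (i) and the path-tree identity. That way only the path-tree construction needs real work.

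\emph{Part (i).} This is Perron--Frobenius. Since $G$ is connected, $A(G)$ is irreducible and nonnegative, so $\rho_A(G)$ has a strictly positive eigenvector $x$. Let $H$ be a proper subgraph, padded with isolated vertices so that $A(H)\le A(G)$ entrywise with $A(H)\neq A(G)$ (or $H$ misses a vertex). Take a nonnegative Perron vector $y$ of $A(H)$. Then
$$\rho_A(H)\,x^{T}y=x^{T}A(H)y\le x^{T}A(G)y=\rho_A(G)\,x^{T}y.$$
For strictness: if equality held, $y$ would be a nonnegative eigenvector of the irreducible $A(G)$ for $\rho_A(G)$, hence positive. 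Then the difference $(A(G)-A(H))y$ would have a positive entry, which is a contradiction.

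\emph{Path-tree identity.} Write $T=T(G,v)$. The core is Godsil's divisibility statement: $\mathscr{M}(G,x)/\mathscr{M}(G-v,x)=\mathscr{M}(T,x)/\mathscr{M}(T-v,x)$, and moreover $\mathscr{M}(G,x)$ divides $\mathscr{M}(T,x)$. I would prove this by induction on $|V(G)|$ using the recurrence
$$\mathscr{M}(G,x)=x\,\mathscr{M}(G-v,x)-\sum_{u\sim v}\mathscr{M}(G-v-u,x).$$
The key structural fact is that $T-v$ is the disjoint union of the path-trees $T(G-v,u)$ for $u\sim v$. Applying the induction hypothesis to each component of $G-v$ and telescoping gives the ratio identity, and from the ratio identity one gets divisibility. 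Since $T$ is a tree, $\mathscr{M}(T,x)=\phi(T,x)$, so every matching root of $G$ is an eigenvalue of $T$, giving $\mu(G)\le\rho_A(T)$.

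For the reverse inequality I would use interlacing: the largest root of $\mathscr{M}(G,x)$ exceeds that of $\mathscr{M}(G-v,x)$. By the ratio identity, $\mathscr{M}(T,x)$ and $\mathscr{M}(G,x)$ then have the same largest root, because the largest pole structure matches on both sides. I expect this induction, with its careful bookkeeping of the ratio identity and the interlacing needed to match the largest roots, to be the main obstacle.

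\emph{Remaining claims in (ii).} For $\mu(G)<2\sqrt{\Delta-1}$: the path-tree $T$ has maximum degree at most $\Delta$, and the root $v$ has degree $\le\Delta$. A finite tree of maximum degree $\Delta$ is a proper subgraph of a sufficiently deep complete $(\Delta-1)$-ary-type tree, whose spectral radius is $2\sqrt{\Delta-1}\cos(\pi/(k+1))<2\sqrt{\Delta-1}$. Part (i) then gives the bound.

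For $\mu(H)<\mu(G)$ with $H$ connected: choose $v\in V(H)$. Then $T(H,v)$ is a subtree of $T(G,v)$, and it is proper because $H\subset G$ and $G$ is connected, so some path from $v$ in $G$ is not a path in $H$. Applying the path-tree identity to both graphs and then (i) to $T(G,v)$ gives the strict inequality. If $H$ is disconnected, apply this argument to the component realizing $\mu(H)$.
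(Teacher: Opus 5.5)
The paper proves nothing here; it cites Cvetkovi\'c--Rowlinson--Simi\'c for (i) and Godsil and Heilmann--Lieb for (ii). Most of your reconstruction is sound:
the Perron--Frobenius argument for (i);
the induction giving $\mathscr{M}(G)/\mathscr{M}(G-v)=\mathscr{M}(T)/\mathscr{M}(T-v)$ and $\mathscr{M}(G)\mid\mathscr{M}(T)$ (divisibility needs the inductive divisibility on each component of $G-v$, each of which contains a neighbour of $v$, not the ratio identity alone);
the embedding of $T(G,v)$ into a complete $(\Delta-1)$-ary tree;
and the proper inclusion $T(H,v)\subset T(G,v)$.

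The gap is the step $\rho_A(T)\le\mu(G)$. You justify it by ``the largest root of $\mathscr{M}(G,x)$ exceeds that of $\mathscr{M}(G-v,x)$''. For connected $G$ this is exactly the case $H=G-v$ of the monotonicity $\mu(H)<\mu(G)$. But you later \emph{derive} that monotonicity from $\rho_A(T(G,v))=\mu(G)$. So as written the argument is circular, or it imports strict Heilmann--Lieb interlacing as an unproved black box, contrary to your plan of using only (i) and the path-tree construction. The appeal to matching ``pole structure'' does not supply this fact. It also sits on the wrong side: the $G$-side fact would at best re-prove $\mu(G)\le\rho_A(T)$, which divisibility already gives you.

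The repair is to work on the tree side, where (i) already gives what you need. Cross-multiply the ratio identity and use $\mathscr{M}=\phi$ for forests. This gives the polynomial identity $\mathscr{M}(G,x)\,\phi(T-v,x)=\phi(T,x)\,\mathscr{M}(G-v,x)$. Put $\theta=\rho_A(T)$. Since $T$ is connected and $T-v\subset T$, part (i) gives $\rho_A(T-v)<\theta$, so $\phi(T-v,\theta)\neq 0$, while $\phi(T,\theta)=0$. Hence $\mathscr{M}(G,\theta)=0$ and $\rho_A(T)\le\mu(G)$. Together with $\mu(G)\le\rho_A(T)$ from divisibility, this proves $\rho_A(T(G,v))=\mu(G)$ with no interlacing for matching polynomials. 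Your later derivation of $\mu(H)<\mu(G)$ is then non-circular.

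A minor point: the bound $\mu(G)<2\sqrt{\Delta(G)-1}$ requires $\Delta(G)\ge 2$ (it fails for $K_2$), and your embedding argument tacitly assumes this.
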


Hoffman and Smith \cite{Ho and Sm} defined an {\it internal path} of a graph $G$ as a walk $v_0,v_1,\cdots,v_k(k \geq 1)$ where the vertices $v_1,\cdots,v_k$ are distinct ($v_0, v_k$ need not be distinct), $d_{G}(v_0) > 2$, $d_{G}(v_k) > 2$ and $d_{G}(v_i) = 2$ whenever $0 < i < k$. In terms of the definition, two types of internal paths (see Fig. 2) are shown as follows:\smallskip

Type A. $v_0= v_k$, $d_{G}(v_0) \geq 3$ and $d_{G}(v_i) = 2$ for $0 < i < k$.\smallskip

Type B.  $v_0 \neq v_k$, $d_{G}(v_0)\geq 3$, $d_{G}(v_k) \geq 3$ and $d_{G}(v_i) = 2$ for $0 < i < k$.\smallskip

\begin{center}
\setlength{\unitlength}{1mm} \linethickness{1pt}
\begin{picture}(155,34)
\qbezier[5](15.5,29.8)(18,32)(21,32)
\qbezier[7](14.5,22.5)(15,18)(21,17.7) \put(28,25){\circle*{1.5}}
\multiput(34,29)(0,-8){2}{\circle*{1.5}}
\multiput(33.7,23)(0,1.8){3}{\line(0,1){0.5}}
\put(28,25){\line(3,2){6}} \put(28,25){\line(3,-2){6}}
\put(24,24.5){$v_{_0}$} \put(15.5,29.8){\circle*{1.5}}
\put(17,28){$v_t$} \put(14.5,22.5){\circle*{1.5}}
\put(16,22){$v_{t+{\!}1}$} \linethickness{0.6pt}
\qbezier(15.5,29.8)(13,26)(14.5,22.5) \put(21,32){\circle*{1.5}}
\put(21,17.7){\circle*{1.5}} \qbezier(21,32)(28,31)(28,25)
\qbezier(21,17.7)(28,20)(28,25) \put(17,12){\mbox{Type A}}

\multiput(60,20)(0,10.1){2}{\circle*{1.5}}
\multiput(68,25)(9,0){2}{\circle*{1.5}}
\multiput(85,25)(10,0){3}{\circle*{1.5}}
\multiput(115,25)(9,0){2}{\circle*{1.5}}
\multiput(132,20)(0,10){2}{\circle*{1.5}} \linethickness{1pt}
\multiput(62,23)(0,1.8){3}{\line(0,1){0.5}}
\multiput(130,23)(0,1.8){3}{\line(0,1){0.5}}
\put(60,20){\line(3,2){8}} \put(68,25){\line(-3,2){8}}
\linethickness{0.6pt} \put(68,25.3){\line(1,0){9}}
\multiput(77,25.3)(1.5,0){28}{\line(1,0){0.5}}
\put(84,25.3){\line(1,0){20}} \put(115,25.3){\line(1,0){9}}
\put(124,25){\line(3,-2){8}} \put(124,25){\line(3,2){8}}
\put(67.5,27.5){$v_{_0}$} \put(75.5,27.5){$v_{_1}$}
\put(83.5,27.5){$v_{_{t-1}}$} \put(93.5,27.5){$v_{_t}$}
\put(103,27.5){$v_{_{t+1}}$} \put(112,27.5){$v_{_{k-1}}$}
\put(122,27.5){$v_{_k}$} \put(90,12){Type B}
\put(50,3){{\bf Fig. 2:} Two types of internal path.}
\end{picture}
\end{center}
 \vspace{-4mm}

In Spectral Graph Theory, the properties of internal path have been studied in detail (see Lemma \ref{adja subd}).  Contrary to previous views, we here provide a novel conclusion for the largest matching roots of graphs (see Lemma \ref{3.1}). Let $F_{s,r}$ $(s>1,r\geq1)$ be the tree obtained from $P_{s+r+1}$: $v_{0}v_{1}\cdots v_{s}u_{1}\cdots u_{r}$ and two isolated vertices $w_{1},w_{2}$ by adding edges $v_{1}w_{1}$ and $v_{s}w_{2}$.
\begin{lem}{\rm\cite{Ho and Sm}}\label{adja subd}
	Let $uv$ be an edge of the connected graph $G$ and let $G_{uv}$ be obtained from $G$ by subdividing $uv$.
	\begin{itemize}
		\item[{\rm (i)}]
		If $uv$ is not in an internal path and $G\neq C_{n}$, then $\rho_{A}(G_{uv})>\rho_{A}(G).$
		\item[{\rm (ii)}]
		If $uv$ is in an internal path and $G\neq F_{s,1}$, then $\rho_{A}(G_{uv})<\rho_{A}(G).$
	\end{itemize}
\end{lem}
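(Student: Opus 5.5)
The plan is to prove (i) by a subgraph-embedding argument and (ii) by a Rayleigh-quotient argument applied to the Perron vector of $G_{uv}$. The only place Smith's classification is needed is the case $\rho_A(G_{uv})\le 2$. Throughout, $\rho'=\rho_A(G_{uv})$.

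\emph{Part (i).} First I will locate the edge $uv$. Walk from $uv$ in both directions through vertices of degree $2$. If neither direction ever meets a vertex of degree $\neq 2$, then $G$ is a cycle, which is excluded. If both directions meet vertices of degree $\geq 3$, then $uv$ lies on an internal path (Type A or Type B). So, since $uv$ is not on an internal path, at least one direction ends at a vertex of degree $1$, i.e. $uv$ lies on a pendant path. Subdividing an edge of a pendant path gives a graph isomorphic to the one obtained by lengthening that pendant path by one vertex. Hence $G$ is isomorphic to a proper subgraph of the connected graph $G_{uv}$, and Lemma \ref{largest root}(i) gives $\rho_A(G)<\rho_A(G_{uv})$.

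\emph{Part (ii), case $\rho'>2$.} Let $x>0$ be the unit Perron vector of $A(G_{uv})$. On the internal path of $G_{uv}$ containing the two halves of $uv$, every interior vertex $w_i$ satisfies $x_{i-1}+x_{i+1}=\rho' x_i>2x_i$. Thus the entries along the path are strictly convex, so a minimum over the closed path is attained at an interior vertex $w_j$; interior vertices exist because the subdivision creates one. Contracting $w_j$ (deleting it and joining $w_{j-1}w_{j+1}$) gives a graph isomorphic to $G$. Let $y$ be the restriction of $x$ to the remaining vertices. Then
\[
y^{T}A(G)y=\rho'-2x_j(x_{j-1}+x_{j+1})+2x_{j-1}x_{j+1}=\rho'-2\rho' x_j^2+2x_{j-1}x_{j+1}.
\]
We need this to exceed $\rho'\,y^Ty=\rho'(1-x_j^2)$, i.e. $2x_{j-1}x_{j+1}>\rho' x_j^2$. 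Since $x_{j\pm1}\ge x_j$ and $x_{j-1}+x_{j+1}=\rho' x_j$, the product satisfies $x_{j-1}x_{j+1}\ge x_j(\rho'-1)x_j$. Moreover $2(\rho'-1)>\rho'$ holds exactly because $\rho'>2$. The Rayleigh principle then gives $\rho_A(G)>\rho'$. (If $v_0=v_k$ for a Type A path, the same computation applies, with the cross term $x_{j-1}x_{j+1}$ unchanged.)

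\emph{Part (ii), case $\rho'\le 2$, the expected main obstacle.} Here I invoke Smith's classification of connected graphs with spectral radius at most $2$. Since $G_{uv}$ has an internal path, it has a vertex of degree $\ge 3$ and is not a cycle. The Dynkin trees with $\rho<2$ ($P_n$, $D_n$, $E_6,E_7,E_8$) have at most one branch vertex and hence no internal path. Among the graphs with $\rho=2$, only $\tilde D_n$ has an internal path; $\tilde E_6,\tilde E_7,\tilde E_8$ and the cycles do not. So $\rho'\le 2$ forces $G_{uv}=\tilde D_n$. Then $G$, obtained by contracting one edge of its internal path, is $\tilde D_{n-1}$, and this is precisely $F_{s,1}$, which is excluded. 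The care needed here is in checking that the small cases (e.g. $\tilde D_4$, $s=2$, and Type A loops) all match the definition of $F_{s,1}$.
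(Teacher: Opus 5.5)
The paper gives no proof of this lemma; it is quoted from Hoffman and Smith, so your argument has to stand on its own. Part (i), the reduction of the case $\rho'\le 2$ to $G_{uv}=\tilde D_n$, and the Rayleigh computation are all fine. That computation settles every case in which the Perron vector $x$ of $G_{uv}$ has an interior local minimum on the path. This includes Type A, where $x_0=x_m$ forces one.

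\textbf{The gap.} It is the sentence claiming that strict convexity puts the minimum over the closed path at an interior vertex. Strict convexity only excludes an interior maximum. On a Type B path the entries can be strictly monotone, with the minimum at an end vertex of degree $\ge 3$. Then $2x_{j-1}x_{j+1}>\rho' x_j^2$ can fail for every interior $j$.

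For instance, let $G$ consist of $K_4$ on $\{a,b,c,d\}$, an edge $aw$, and two pendant vertices at $w$. Then $aw$ is an internal path and $G\neq F_{s,1}$. In $G_{aw}$, with new vertex $z$, we have $\rho'>\rho_A(K_4)=3$. The eigen-equations at $w$ and its two leaves give $x_w=q\,x_z$ with $q=\rho'/(\rho'^2-2)<1$, so $x_a=\rho' x_z-x_w>x_z>x_w$. The only interior vertex is $z$, and $2x_ax_w=2q(\rho'-q)x_z^2<\frac{2\rho'^2}{\rho'^2-2}x_z^2<\frac{18}{7}x_z^2<\rho' x_z^2$. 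Numerically, $\rho'\approx 3.10$ and $2x_ax_w\approx 2.19\,x_z^2$. So the restricted vector has Rayleigh quotient below $\rho'$, and there is no other vertex you could contract.

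\textbf{What is missing.} You need a modification of $x$ that is not local to the path. Write $x_i=\alpha t^i+\beta t^{-i}$ on the path $w_0,\dots,w_m$ of $G_{uv}$, where $t>1$ and $t+t^{-1}=\rho'$. Each of the at least two neighbours $b\neq w_{m-1}$ of $w_m$ satisfies $x_b\ge x_m/\rho'$. Hence $\alpha t^{m+1}+\beta t^{-m-1}=\rho' x_m-x_{m-1}\ge 2x_m/\rho'>x_m/t$, which forces $\alpha>0$. Symmetrically $\beta>0$, that is, $t^{-m}<x_0/x_m<t^m$.

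When the path consists of cut edges, this can be exploited. That covers, for example, every tree, which is the case the path-tree argument of Lemma~\ref{3.1}(ii) needs. Put $y=x$ on the component containing $w_0$ and $y=c\,x$ on the component containing $w_m$. On the shortened path, let $y$ satisfy $y_{i-1}+y_{i+1}=\rho' y_i$ with end values $x_0$ and $c\,x_m$. With $s_j=\sinh(j\log t)$, the two end conditions needed for $A(G)y\ge\rho' y$ are $c\ge (x_0s_1+x_ms_{m-1})/(x_ms_m)$ and $c\le x_0s_m/(x_0s_{m-1}+x_ms_1)$. Some $c$ satisfies both strictly exactly when $t^{-m}<x_0/x_m<t^m$, and Perron--Frobenius then gives $\rho_A(G)>\rho'$.

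For a Type B path lying on a cycle this rescaling is unavailable, so that case still needs a different argument.
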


\begin{lem}\label{3.1}
	Let $uv$ be an edge of the connected graph $G$ and let $G_{uv}$ be obtained from $G$ by subdividing $uv$.
	\begin{itemize}
		\item[{\rm (i)}]
		If $uv$ is not in an internal path or $uv$ is in an internal path of Type A, then $$\mu(G_{uv})>\mu(G).$$
		\item[{\rm (ii)}]
		If $uv$ is a cut edge in an internal path of Type B and $G\neq F_{s,1}$, then $$\mu(G_{uv})<\mu(G).$$
	\end{itemize}
\end{lem}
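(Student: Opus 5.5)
We reduce everything to spectral radii of trees by means of Godsil's path-tree. By Lemma \ref{largest root}(ii), $\mu(G)=\rho_A(T(G,v))$ for any vertex $v$, and likewise $\mu(G_{uv})=\rho_A(T(G_{uv},v'))$. The plan is to compare the two path-trees. In each case we either show that one contains the other (so Lemma \ref{largest root}(i) applies), or we exhibit $T(G_{uv},v')$ as obtained from $T(G,v)$ by subdivisions of a controlled kind (so Lemma \ref{adja subd} applies).

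For (i), first suppose $uv$ is not in an internal path. Then $uv$ lies on a pendant path: one side of the walk containing $uv$ ends at a vertex of degree $1$ after passing only through degree-$2$ vertices. Such a path is a cut-off tail, so $G_{uv}$ is just $G$ with that tail lengthened by one. Hence $G$ is isomorphic to a proper subgraph of $G_{uv}$, which is connected. By Lemma \ref{largest root}(ii), $\mu(G)<\mu(G_{uv})$. (If $G$ itself is a path or a cycle, this is immediate from $\mu(P_n)=2\cos\frac{\pi}{n+1}$ and the analogous formula for cycles.)

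Now suppose $uv$ is in an internal path of Type A, i.e.\ a cycle $C$ through $v_0$ with all other vertices of degree $2$. Take the root of the path-tree to be $v_0$. Every path starting at $v_0$ either avoids $C$ beyond $v_0$, or runs along $C$ in one of two directions for some initial segment and then continues into the rest of $G$ only by returning to $v_0$. Returning is impossible, since paths cannot repeat $v_0$. So the cycle contributes to $T(G,v_0)$ exactly two pendant paths, each of length $k-1$, attached at the root. In $T(G_{uv},v_0)$ these two pendant paths become length $k$, and everything else is unchanged. Thus $T(G,v_0)$ is a proper subgraph of the connected tree $T(G_{uv},v_0)$, and $\mu(G)<\mu(G_{uv})$ follows from Lemma \ref{largest root}(i).

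For (ii), $uv$ is a cut edge inside a Type B internal path $v_0v_1\cdots v_k$. Since $uv$ is a cut edge, so is every edge of this internal path, and all of its inner vertices have degree $2$. Root the path-tree at $v_0$. Each path from $v_0$ that crosses the internal path contains the whole segment $v_0\cdots v_k$. Also, no path can return across it, because it is a bridge chain. The key step is to show that the segment therefore appears in $T(G,v_0)$ as a single internal path of that tree, from the vertex ``$v_0$'' to the vertex ``$v_0\cdots v_k$''. Indeed, the intermediate vertices $v_0\cdots v_i$ have degree exactly $2$ in the path-tree. The endpoints have degree at least $3$ in the path-tree, because $d_G(v_0)\ge3$ and $d_G(v_k)\ge3$; for the root $v_0$, the branches correspond to its other neighbours. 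Subdividing $uv$ in $G$ then corresponds exactly to subdividing one edge of this internal path of the tree $T(G,v_0)$. So Lemma \ref{adja subd}(ii) gives $\rho_A(T(G_{uv},v_0))<\rho_A(T(G,v_0))$, provided $T(G,v_0)\neq F_{s,1}$.

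We expect the main obstacle to be this exceptional case. We must check that $T(G,v_0)\cong F_{s,1}$ forces $G\cong F_{s,1}$. The plan is as follows. $F_{s,1}$ is a tree with exactly two vertices of degree $3$. Any cycle in $G$ that is reachable from $v_0$ produces at least two distinct long branches in the path-tree, together with extra branch vertices. A counting argument on the degree-$3$ vertices and leaves should therefore show that $G$ must be a tree. For a tree, $T(G,v_0)\cong G$, which settles the case. A secondary point is to verify carefully that the bridge hypothesis prevents other copies of the segment from appearing elsewhere in the path-tree. This holds because every path reaching $v_k$ from $v_0$ must use this chain, so the chain appears exactly once.
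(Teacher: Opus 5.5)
Your route is the paper's. You identify $\mu$ with $\rho_A$ of Godsil's path-tree, and you get (i) from a proper-subgraph relation: between $G$ and $G_{uv}$ for a pendant chain, and between path-trees for a cycle or a Type A path. You get (ii) by recognising $T(G_{uv},v_0)$ as $T(G,v_0)$ with one edge of an internal path subdivided, then applying Lemma \ref{adja subd}(ii). In (ii) you are more careful than the paper, whose proof cites only Lemma \ref{largest root}(ii) and never addresses the exception in Lemma \ref{adja subd}(ii). That exception concerns the tree being subdivided, so what is needed is $T(G,v_0)\not\cong F_{s,1}$, which is not literally the hypothesis $G\neq F_{s,1}$.

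However, you leave exactly this step as a plan (``a counting argument \dots should show that $G$ must be a tree''). The heuristic about cycles producing long branches is not an argument as it stands, and it is also unnecessary: a local argument closes the step.

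\begin{itemize}
\item Suppose $T(G,v_0)\cong F_{s,1}$. That tree has exactly one internal path, joining its two degree-$3$ vertices, and each of these has two leaf neighbours off the path.
\item You have shown that the root $v_0$ and the path-tree vertex $v_0v_1\cdots v_k$ are the ends of an internal path of $T(G,v_0)$. For the latter end, note that no neighbour of $v_k$ other than $v_{k-1}$ lies on $v_0\cdots v_{k-1}$, by the bridge property and $d_G(v_i)=2$ for $0<i<k$. So its degree in the path-tree is $d_G(v_k)$.
\item Hence these two vertices are the two degree-$3$ vertices of $F_{s,1}$, and $d_G(v_0)=d_G(v_k)=3$.
\item A child $v_0a$ of the root is a leaf exactly when $d_G(a)=1$.
\item A child $v_0\cdots v_kc$ (with $c\neq v_{k-1}$) is a leaf exactly when $N_G(c)\subseteq\{v_0,\dots,v_k\}$. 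By the same bridge argument this forces $N_G(c)=\{v_k\}$.
\item Thus $v_0$ and $v_k$ each carry two pendant vertices. By connectivity $G$ is exactly the chain with these four leaves, i.e.\ $G\cong F_{k+1,1}$, which the hypothesis excludes.
\end{itemize}

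With this inserted, your proof is complete.
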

\begin{proof}
	(i) For the former, if $G\neq C_{n}$, then $G\subset G_{uv}$, and so $\mu(G_{uv})>\mu(G)$ by Lemma \ref{largest root}(ii). If $G=C_{n}$, then the path-tree $T(G,u)=P_{2n-1}$ is a proper subgraph of path-tree $T(G_{uv},u)=P_{2n+1}$. By Lemma \ref{largest root}(i) and (ii) we get $\rho_A(T(G_{uv},u)) > \rho_A(T(G,u))$, and thus $\mu(G_{uv})>\mu(G)$.
	
	For the latter, as labeled in Type A of Fig. 2, we get $T(G,v_{0}) \subset T(G_{uv},v_{0})$. Hence,  $\mu(G_{uv})>\mu(G)$ follows by Lemma \ref{largest root}(i) and (ii).
	
	(ii) Since $uv$ is a cut edge in an internal path of Type B, all edges in this internal path are cut edges. Hence, the path-tree $T(G_{uv},v_{0})$ is obtained from $T(G,v_{0})$ by subdividing an edge of the internal path, as labeled in Type B of Fig. 2. Then $\mu(G_{uv})<\mu(G)$ by Lemma \ref{largest root}(ii).
\end{proof}

We next introduce a formula for computing matching polynomials. For a subset $W\subseteq V(G)$, we use $G[W]$ to denote the subgraph of $G$ induced by $W$, and use $G-W$ to denote the induced subgraph $G[V(G)\backslash V(W)]$.

\begin{lem}{\rm \cite{god-book}}
	Let $G$ be a graph and $v\in V(G)$. Then
	\begin{equation}\label{0}
		\mathscr{M}(G,x)=x\mathscr{M}(G-v,x)-\sum_{u\in N_{G}(v)}\mathscr{M}(G-v-u,x),
	\end{equation}
	where $N_{G}(v)$ is the set of all vertices in $V(G)$ adjacent to $v$.
\end{lem}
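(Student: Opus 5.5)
The statement is the vertex-deletion recurrence \eqref{0}. I would prove it directly from the definition $\mathscr{M}(G,x)=\sum_{M\in\mathcal{M}(G)}(-1)^{|M|}x^{|V(G)\setminus V(M)|}$ by partitioning $\mathcal{M}(G)$ according to how the fixed vertex $v$ is treated.

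Every matching $M$ of $G$ either leaves $v$ uncovered, or contains exactly one edge $vu$ with $u\in N_G(v)$. This gives a disjoint decomposition
\[
\mathcal{M}(G)=\mathcal{M}_0\ \cup\ \bigcup_{u\in N_G(v)}\mathcal{M}_u ,
\]
where $\mathcal{M}_0$ consists of the matchings avoiding $v$ and $\mathcal{M}_u$ of those containing $vu$. Exactly one edge at $v$ occurs because matching edges are disjoint, so the union is indeed disjoint.

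I would then set up the two bijections.
\begin{itemize}
\item $\mathcal{M}_0$ is precisely $\mathcal{M}(G-v)$. For such $M$ the uncovered vertex set in $G$ is the uncovered set in $G-v$ together with $v$. So its term equals $x$ times the corresponding term of $\mathscr{M}(G-v,x)$, and summing gives $x\mathscr{M}(G-v,x)$.
\item The map $M\mapsto M\setminus\{vu\}$ is a bijection from $\mathcal{M}_u$ to $\mathcal{M}(G-v-u)$. Its inverse adds back $vu$, which is legitimate since the smaller matching avoids $u$ and $v$.
\end{itemize}

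Under the second bijection the size drops by one, so the sign flips. The uncovered vertices are the same set, because $u$ and $v$ are covered in $G$ and absent in $G-v-u$. Hence the contribution of $\mathcal{M}_u$ is $-\mathscr{M}(G-v-u,x)$. Adding all contributions yields \eqref{0}.

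There is no real obstacle here. The only point needing care is the exponent bookkeeping: $|V(G)\setminus V(M)|$ must be matched with the uncovered count in the smaller graph, namely $+1$ for $\mathcal{M}_0$ and $+0$ for $\mathcal{M}_u$. I would also note the convention that the matching polynomial of the empty graph is $1$, so the formula holds when $G-v-u$ is empty.
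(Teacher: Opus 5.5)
Your proof is correct: partitioning the matchings by whether $v$ is uncovered or matched to a specific neighbour $u$, and tracking the exponent ($+1$ versus $+0$) and the sign flip, is exactly the standard argument. The paper gives no proof of its own and simply cites Godsil's book, where the recurrence is obtained by this same decomposition.
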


\begin{lem}\label{2.9}
	Let $a=\frac{x+\sqrt{x^{2}-4}}{2}$ and  $b=\frac{x-\sqrt{x^{2}-4}}{2}$. Then
	$$\mathscr{M}(P_{n},x)=\frac{1}{\sqrt{x^{2}-4}}(a^{n+1}-b^{n+1}).$$
\end{lem}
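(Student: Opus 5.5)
We prove the formula by induction on $n$, using the vertex recursion \eqref{0} applied at an end vertex of the path. Let $v$ be an end vertex of $P_n$. Then $P_n-v=P_{n-1}$, and $v$ has a unique neighbour $u$, with $P_n-v-u=P_{n-2}$. So \eqref{0} becomes the three-term recurrence
$$\mathscr{M}(P_n,x)=x\,\mathscr{M}(P_{n-1},x)-\mathscr{M}(P_{n-2},x),\qquad n\ge 2,$$
with the conventions $\mathscr{M}(P_0,x)=1$ and $\mathscr{M}(P_1,x)=x$. The case $n=2$ then gives $x^2-1$, which is correct since $P_2$ has the single one-edge matching.

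Next we observe that $a$ and $b$ are the two roots of the characteristic equation $t^2-xt+1=0$ of this recurrence. Indeed $a+b=x$, $ab=1$ and $a-b=\sqrt{x^2-4}$. Hence each of the sequences $a^{n+1}$ and $b^{n+1}$ satisfies $y_n=xy_{n-1}-y_{n-2}$: for example $a^{n+1}=a^{n-1}a^2=a^{n-1}(xa-1)=x a^n-a^{n-1}$. Any linear combination of them, in particular $(a^{n+1}-b^{n+1})/(a-b)$, satisfies the same recurrence.

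Finally we check the two initial values:
$$\frac{a-b}{a-b}=1=\mathscr{M}(P_0,x),\qquad \frac{a^2-b^2}{a-b}=a+b=x=\mathscr{M}(P_1,x).$$
Both sides satisfy the same second-order recurrence and agree at $n=0,1$, so they agree for all $n$ by induction.

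The only point needing care is the degenerate case $x^2=4$, where $a=b$ and the expression is $0/0$. The identity is really one between polynomials: $(a^{n+1}-b^{n+1})/(a-b)=\sum_{i=0}^{n}a^ib^{n-i}$ is symmetric in $a,b$, hence a polynomial in $a+b=x$ and $ab=1$. So the formula holds for all $x$ with $x^2\ne 4$ (for $|x|<2$ the square root is taken as imaginary, $a$ and $b$ are complex conjugates, and the same computation goes through). It then extends to $x=\pm 2$ by continuity, or by reading the right-hand side as this polynomial.
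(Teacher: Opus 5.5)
Your proof is correct and follows the same route as the paper: induction on $n$ via the end-vertex recurrence $\mathscr{M}(P_n,x)=x\,\mathscr{M}(P_{n-1},x)-\mathscr{M}(P_{n-2},x)$, using $a^2=ax-1$ (and likewise for $b$) to carry the closed form forward. The differences are cosmetic: you start the induction at $n=0,1$ with the convention $\mathscr{M}(P_0,x)=1$ instead of at $n=1,2$, and you explicitly handle the degenerate points $x=\pm 2$ by reading the right-hand side as a polynomial, which the paper leaves implicit.
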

\begin{proof}
	We employ induction on $n$. It is easy to verify the result for $n=1,2$. Suppose that the result holds for $1\leq k\leq n$ with $n\geq3$. When $k=n+1$, it follows from Equality \eqref{0}  that
	\begin{align*}
		\mathscr{M}(P_{n+1},x)&=x\mathscr{M}(P_{n},x)-\mathscr{M}(P_{n-1},x)\\
		&=\frac{x}{\sqrt{x^{2}-4}}\left(a^{n+1}-b^{n+1}\right)-\frac{1}{\sqrt{x^{2}-4}}\left(a^{n}-b^{n}\right)\\
		&=\frac{1}{\sqrt{x^{2}-4}}\left(a^{n}(ax-1)-b^{n}(bx-1)\right)\\
		&=\frac{1}{\sqrt{x^{2}-4}}\left(a^{n+2}-b^{n+2}\right).	
	\end{align*}
	This finishes the proof.
\end{proof}

Together with Equality \eqref{0} and Lemma \ref{2.9}, we determine the limit point of the largest matching roots of  a graph sequence obtained by adding $k$ new path $P_{n}$ on a vertex of $G$. We will label $\mathscr{M}(G,x)$ as $\mathscr{M}(G)$ if it is clear from text.
\begin{lem}\label{k largest root equation}
	Let $G$ be a connected graph with $v\in V(G)$. $L_{v}(G;kP_{n})$ is obtained from $G$ and $k$ new paths $P_{n}$ by adding $k$ new edges $v_{i}v$ with $i=1,\cdots, k$, respectively, where $v_{i}$ is an end vertex of $i$-th path $P_{n}$. Then
	\begin{itemize}
		\item[{\rm (i)}]
		$\lim\limits_{n \to \infty}\mu(L_{v}(G;kP_{n}))=r$ exists and $r\geq 2$.
		\item[{\rm (ii)}] If $r>2$, then $r$ is the largest positive root of the equation:
		\begin{equation*}
			\frac{x+\sqrt{x^{2}-4}}{2k}\mathscr{M}(G,x)-\mathscr{M}(G-v,x)=0.
		\end{equation*}
	\end{itemize}
\end{lem}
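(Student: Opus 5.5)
The plan is to handle monotonicity and existence first, then compute the limit through a recursion for the matching polynomial of $L_v(G;kP_n)$ in $n$, using Lemma \ref{2.9}.

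\textbf{Part (i).} Write $G_n=L_v(G;kP_n)$. Then $G_n$ is a proper subgraph of $G_{n+1}$, since each attached path $P_n$ is contained in $P_{n+1}$. Lemma \ref{largest root}(ii) therefore gives $\mu(G_n)<\mu(G_{n+1})$. The sequence is also bounded: $\Delta(G_n)\le \max\{\Delta(G)+k,2\}$, so $\mu(G_n)<2\sqrt{\Delta(G_n)-1}$, and the limit $r$ exists. Each $G_n$ contains a path on $n+1$ vertices, and $\mu(P_{n+1})=2\cos\frac{\pi}{n+2}\to 2$, so $r\ge 2$.

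\textbf{The polynomial identity.} Apply \eqref{0} at $v$ in $G_n$, but only peel off the attached paths. Equivalently, expand by edges: for each attached edge $vv_i$, either the edge is unused, or it is used and $v$ and $v_i$ are deleted. Since $v$ is covered at most once, this gives
\begin{equation*}
\mathscr{M}(G_n)=\mathscr{M}(G)\,\mathscr{M}(P_n)^k-k\,\mathscr{M}(G-v)\,\mathscr{M}(P_n)^{k-1}\mathscr{M}(P_{n-1}).
\end{equation*}
Now fix $x>2$, so that $a>1>b>0$ and $ab=1$. By Lemma \ref{2.9}, $\mathscr{M}(P_n)>0$ and $\mathscr{M}(P_{n-1})/\mathscr{M}(P_n)\to b=1/a$. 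Dividing by $\mathscr{M}(P_n)^k$,
\begin{equation*}
\frac{\mathscr{M}(G_n,x)}{\mathscr{M}(P_n,x)^k}\longrightarrow f(x):=\mathscr{M}(G,x)-\frac{k}{a}\mathscr{M}(G-v,x)=\frac{k}{a}\Big(\frac{a}{k}\mathscr{M}(G,x)-\mathscr{M}(G-v,x)\Big).
\end{equation*}
Since $a=\frac{x+\sqrt{x^2-4}}{2}$, the bracket is exactly the function in (ii), and the convergence is locally uniform on $(2,\infty)$.

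\textbf{Part (ii): locating $r$.} Suppose $r>2$. For $x>r$ we have $x>\mu(G_n)$ for every $n$, so $\mathscr{M}(G_n,x)>0$, and hence $f(x)\ge 0$ on $(r,\infty)$. The main obstacle is to show that $f$ is strictly positive beyond $r$ and actually vanishes at $r$. Two steps are needed.

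\emph{First}, show $f(x)>0$ for $x>r$. I would use the standard interlacing fact that $\mathscr{M}(G-v,x)/\mathscr{M}(G,x)$ is decreasing for $x>\mu(G)$. Write $f/\mathscr{M}(G)=1-\frac{k}{a}\cdot\frac{\mathscr{M}(G-v)}{\mathscr{M}(G)}$. Since $1/a=b$ is also decreasing in $x$, the product $\frac{k}{a}\cdot\frac{\mathscr{M}(G-v)}{\mathscr{M}(G)}$ is strictly decreasing on $(\mu(G),\infty)$, and so is the whole ratio's complement increasing. Consequently $f$ has at most one zero there, with $f<0$ before it and $f>0$ after it. Also $r>\mu(G)$, because $G$ is a proper subgraph of $G_n$.

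\emph{Second}, show $f(r)=0$. Suppose instead $f(r)>0$. By local uniform convergence, $f>0$ on a neighbourhood $[r-\varepsilon,\infty)$, and then $\mathscr{M}(G_n,x)>0$ on $[r-\varepsilon,\infty)$ for large $n$. But $\mu(G_n)\in[r-\varepsilon,r)$ for large $n$, and $\mathscr{M}(G_n,\mu(G_n))=0$, a contradiction. So $f(r)=0$, and by the first step $r$ is the unique zero above $\mu(G)$, hence the largest positive root of the equation in (ii).

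A cleaner alternative, if the monotonicity argument proves fiddly, is Hurwitz's theorem on the analytic functions $\mathscr{M}(G_n)/\mathscr{M}(P_n)^k$ near $r$. Under this route the zeros $\mu(G_n)\to r$ force $f(r)=0$ directly, and positivity for $x>r$ handles maximality.
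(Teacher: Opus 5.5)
Your proof is correct and follows the same route as the paper: you expand $\mathscr{M}(L_v(G;kP_n))$ at $v$, normalize by path polynomials, and pass to the limit with Lemma \ref{2.9}. Normalizing by $\mathscr{M}(P_n)^k$ rather than $k\mathscr{M}(P_n)^{k-1}\mathscr{M}(P_{n-1})$ only changes the limit function by the positive factor $k/a$.

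Where you go beyond the paper is in proving that $r$ is a zero of the limit function and that no larger zero exists, using the decrease of $\mathscr{M}(G-v,x)/\mathscr{M}(G,x)$ beyond $\mu(G)$ together with locally uniform convergence; the paper only asserts this step (``since $r>2$''). One small fix: state that monotonicity on $(\max\{2,\mu(G)\},\infty)$, since $a$ is only real there.
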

\begin{proof}
Since $L_{v}(G;kP_{n})\subset L_{v}(G;kP_{n+1})$, then
$$\mu(L_{v}(G;kP_{n}))< \mu(L_{v}(G;kP_{n+1}))<\max \{2,2\sqrt{\Delta(G)+k-1}\},$$
and thus $\lim\limits_{n \to \infty}\mu(L_{v}(G;kP_{n}))=r$ exists by Lemma \ref{largest root}(ii). By $P_{n}\subset L_{v}(G;kP_{n})$ and Lemma \ref{Pn Cn} we have $r\geq 2$.	
	
For simplicity, set $G'=L_{v}(G;kP_{n})$. In the light of \eqref{0},
\begin{align*}
	\mathscr{M}(G')=&x\mathscr{M}(G'-v)-\sum_{u\in N_{G'}(v)}\mathscr{M}(G'-v-u)\\[2mm]
	=&x\mathscr{M}(G-v)\mathscr{M}^{k}(P_{n})-\sum_{u\in N_{G}(v)}\mathscr{M}(G-v-u)\mathscr{M}^{k}(P_{n})-k\mathscr{M}(G-v)\mathscr{M}^{k-1}(P_{n})\mathscr{M}(P_{n-1})\\[2mm]
	=&\mathscr{M}(G)\mathscr{M}^{k}(P_{n})-k\mathscr{M}(G-v)\mathscr{M}^{k-1}(P_{n})\mathscr{M}(P_{n-1})\\[2mm]
	=&k\mathscr{M}^{k-1}(P_{n})\mathscr{M}(P_{n-1})\left(\frac{\mathscr{M}(P_{n})}{k\mathscr{M}(P_{n-1})}\mathscr{M}(G)-\mathscr{M}(G-v)\right).
\end{align*}	
For $x>2$, it is easy to verify that $\frac{b}{a}<1$. By Lemma \ref{2.9} we have
\begin{align*}
	\lim_{n \to \infty}\left(\frac{\mathscr{M}(P_{n})}{k\mathscr{M}(P_{n-1})}\mathscr{M}(G)-\mathscr{M}(G-v)\right)
	&=\dfrac{a}{k}\mathscr{M}(G)-\mathscr{M}(G-v)\\[2mm]
	&=\frac{x+\sqrt{x^{2}-4}}{2k}\mathscr{M}(G)-\mathscr{M}(G-v).
\end{align*}
Since $r>2$, we conclude that $r$ is the largest positive root of the equation:
\begin{equation*}
	\frac{x+\sqrt{x^{2}-4}}{2k}\mathscr{M}(G,x)-\mathscr{M}(G-v,x)=0.
\end{equation*}
This finishes the proof.		
\end{proof}

In the following, we focus on the limit points of the largest matching roots of graphs smaller than $\tau^{\frac{1}{2}}+\tau^{-\frac{1}{2}}$. From the fact that $\mu(G) = \max\{\mu(G_1),\mu(G_2)\}$ for $G = G_1 \cup G_2$, we can suppose that each graph in $\{G_i | i \in \mathbb{N}\}$ is connected. Set $\mu(G_{i})\neq\mu(G_{j})$ for $i\neq j$ and $\lim\limits_{i \to \infty}\mu(G_i)=r < \tau^{\frac{1}{2}}+\tau^{-\frac{1}{2}}$.

We first claim that $\Delta(G_{i})$ is bounded and $\lim\limits_{i \to \infty}d(G_{i})= \infty$, where $d(G_{i})$ is the diameter of $G_{i}$. Notice that for the star $K_{1,i}$,  we have $\lim\limits_{i \to \infty}(\mu(K_{1,i})) = \lim\limits_{i \to \infty}\sqrt{i} = \infty$ by \eqref{mu=rho}. Together with the inequality $|V(G_{i})|\leq \Delta(G_{i})^{d(G_{i})}+1$ (see \cite{Ho A J}), we obtain the desired claim.
	
For our goal, we proceed to discuss by the values of $\Delta(G_{i})$. Actually, we can claim that $\Delta(G_{i})\leq 3$. For this purpose, the following results are needed.

\begin{center}
	\setlength{\unitlength}{1mm} \linethickness{1pt}
	\begin{picture}(80,45)
		
		\multiput(8,27.5)(12,0){6}{\circle*{1.5}}
		\linethickness{0.6pt}
		\multiput(20,15.5)(0,24){2}{\circle*{1.5}}
		\put(20,15.5){\line(0,3){24}}
		\put(8,27.5){\line(3,0){38}}
		\put(54,27.5){\line(3,0){14}}

		\multiput(48,27.5)(1.5,0){12}{\line(1,0){0.5}}

		\put(7,23){$v$} \put(16,23){$v_{1}$}\put(16,15){$u$}\put(16,39){$w$}
		\put(28,23){$v_{2}$}\put(40,23){$v_{3}$}\put(52,23){$v_{n-1}$}\put(64,23){$v_{n}$}
		
		\put(5,4){{\bf Fig. 3:} The graph $S_{n}(n\geq1)$.}
	\end{picture}
\end{center}

\vspace{-6mm}

\begin{lem}\label{Sn}
	Let $S_{n}$ be obtained from $P_{n}:v_{1}v_{2}\cdots v_{n-1}v_{n}$ and three isolated vertices $u, v$ and $w$ by adding three new edges $uv_{1}, vv_{1}$ and $wv_{1}$, which is shown in Fig. 3. Then
	$$\lim\limits_{i \to \infty}\mu(S_{i})=\dfrac{3}{\sqrt{2}}>\tau^{\frac{1}{2}}+\tau^{-\frac{1}{2}}.$$
\end{lem}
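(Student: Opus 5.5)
Since $S_n$ is exactly $L_{v_1}(K_{1,3};P_{n-1})$ (the star $K_{1,3}$ with center $v_1$ and leaves $u,v,w$, with a pendant path attached at $v_1$), the plan is to apply Lemma \ref{k largest root equation} with $G=K_{1,3}$, the attachment vertex the center $v_1$, and $k=1$. Part (i) gives that $r=\lim_{i\to\infty}\mu(S_i)$ exists and $r\ge 2$. To invoke part (ii) we need $r>2$. This follows because $S_n$ is increasing in $n$ and already $S_2$ (the star $K_{1,4}$, a tree) has $\mu(S_2)=\rho_A(K_{1,4})=2$. Hence $\mu(S_3)>2$ by Lemma \ref{largest root}(ii), and so $r>\mu(S_3)>2$.

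Next I compute the two matching polynomials in the equation of Lemma \ref{k largest root equation}(ii). We have $\mathscr{M}(K_{1,3},x)=x^4-3x^2$, and $K_{1,3}-v_1=3K_1$ gives $\mathscr{M}(K_{1,3}-v_1,x)=x^3$. The limiting equation becomes
$$\frac{x+\sqrt{x^2-4}}{2}\,(x^4-3x^2)-x^3=0,$$
that is, for $x>0$,
$$\bigl(x+\sqrt{x^2-4}\bigr)(x^2-3)=2x .$$
I isolate the radical as $\sqrt{x^2-4}\,(x^2-3)=2x-x^3+3x=x(5-x^2)$ and square. This gives $(x^2-4)(x^2-3)^2=x^2(5-x^2)^2$. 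Setting $y=x^2$, this reads $(y-4)(y-3)^2=y(y-5)^2$. Expanding, the left side is $y^3-10y^2+33y-36$ and the right side is $y^3-10y^2+25y$, so $8y=36$ and $y=9/2$. Thus $x=3/\sqrt2$.

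I then need to check that this is a genuine root and not an artifact of squaring. At $y=9/2$ we have $x^2-3=3/2>0$, $5-x^2=1/2>0$, and $\sqrt{x^2-4}=1/\sqrt2$. Then $\sqrt{x^2-4}\,(x^2-3)=\tfrac{3}{2\sqrt2}$ and $x(5-x^2)=\tfrac{3}{\sqrt2}\cdot\tfrac12$, which agree. Since the squared equation has only this positive root, $r=3/\sqrt2$ is the largest positive root, as required.

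Finally, I compare with $\tau^{1/2}+\tau^{-1/2}=\sqrt{2+\sqrt5}$. Squaring both sides, the claim is $9/2>2+\sqrt5$, i.e. $5/2>\sqrt5$, i.e. $25/4>5$, which is true. The only delicate point is justifying $r>2$ so that Lemma \ref{k largest root equation}(ii) applies, together with the extraneous-root check; both are handled above.
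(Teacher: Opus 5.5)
Your proof is correct and follows the same route as the paper. Both use strict monotonicity of $\mu(S_i)$ together with $\mu(S_2)=\mu(K_{1,4})=2$ to get $r>2$, then apply Lemma \ref{k largest root equation}(ii) with $G=S_1=K_{1,3}$, attachment vertex $v_1$ and $k=1$, which reduces to $2x^2-9=0$. Your explicit squaring with the extraneous-root check, and your verification that $9/2>2+\sqrt5$, supply steps the paper leaves implicit.
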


\begin{proof}
	Due to $S_{i} \subset S_{i+1}$ and $\mu(S_{2})=2$, then  $2<\mu(S_{i})<\mu(S_{i+1})<2\sqrt{3}$ for $i>2$. Thus,  $\lim\limits_{i \to \infty}\mu(S_{i})=r > 2$ exists. In terms of Lemma \ref{k largest root equation}(ii), $r$ is the largest root of the equation:
	$$\frac{x+\sqrt{x^{2}-4}}{2}\mathscr{M}(S_{1},x)-\mathscr{M}(S_{1}-v_{1},x)=0.$$
	Since $\mathscr{M}(S_{1},x)=x^{4}-3x^{2}$ and $\mathscr{M}(S_{1}-v_{1},x)=x^{3}$, the previous equation is equivalent to
	$$\frac{x+\sqrt{x^{2}-4}}{2}(x^{4}-3x^{2})-x^{3}=0, \; {\mbox{i.e.},}\;2x^{2}-9=0,$$
	which implies $\lim\limits_{i \to \infty}\mu(S_{i})=\dfrac{3}{\sqrt{2}}$.
\end{proof}
\begin{lem}\label{Pn Cn}
	Let $\{G_{i} | i \in \mathbb{N}\}$ be a graph sequence. For $G_{i} \in \{P_{i}, C_{i}\}$,  then $\lim\limits_{i \to \infty}\mu(P_{i})=\lim\limits_{i \to \infty}\mu(C_{i})=2$.
\end{lem}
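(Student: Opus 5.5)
Both limits reduce to the explicit formula of Lemma \ref{2.9}, or to path-trees for the cycles.

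\textbf{Paths.} Since $P_i \subset P_{i+1}$, Lemma \ref{largest root}(ii) shows that $\mu(P_i)$ is strictly increasing. I would avoid the limiting equation of Lemma \ref{k largest root equation}, because that lemma itself uses the present statement for the bound $r\ge 2$. Instead, I would compute the roots of $\mathscr{M}(P_n,x)$ directly. Put $x=2\cos\theta$ with $0<\theta<\pi$, so that $a=e^{i\theta}$ and $b=e^{-i\theta}$. Lemma \ref{2.9} then gives
$$\mathscr{M}(P_n,2\cos\theta)=\frac{\sin((n+1)\theta)}{\sin\theta}.$$
The roots are therefore $2\cos\frac{j\pi}{n+1}$ for $j=1,\dots,n$. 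These are $n$ distinct values, which is all of them since the degree is $n$. Hence
$$\mu(P_n)=2\cos\frac{\pi}{n+1}\longrightarrow 2.$$
As an alternative for the lower bound, one can check that for $x\ge 2$ the value $\mathscr{M}(P_n,x)$ is positive, because $a\ge b>0$ or $a=b=1$ (in the latter case the limiting form is $n+1$). This gives $\mu(P_n)<2$, and the formula then supplies the convergence.

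\textbf{Cycles.} I would use the path-tree from Lemma \ref{largest root}(ii), exactly as in Example 1 and the proof of Lemma \ref{3.1}: $T(C_i,v)=P_{2i-1}$. Therefore
$$\mu(C_i)=\rho_A(P_{2i-1})=\mu(P_{2i-1})=2\cos\frac{\pi}{2i},$$
using \eqref{mu=rho} for the tree $P_{2i-1}$. This tends to $2$ by the path case. Equivalently, one can sandwich $P_{i}\subset C_i$ and $\mu(C_i)=\mu(P_{2i-1})<2$, which again forces the limit $2$.

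The only delicate point is to avoid circularity with Lemma \ref{k largest root equation}. That is why I would use the explicit trigonometric root formula rather than any limiting equation. Everything else is routine.
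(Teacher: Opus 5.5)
Your proposal is correct and follows the paper's proof. Both use monotonicity and the bound $\mu<2$ from Lemma \ref{largest root}(ii), the explicit value $\mu(P_n)=2\cos\frac{\pi}{n+1}$, and the path-tree identity $T(C_i,v)=P_{2i-1}$ giving $\mu(C_i)=2\cos\frac{\pi}{2i}$. The only difference is that you derive the roots of $\mathscr{M}(P_n,x)$ from Lemma \ref{2.9} via $x=2\cos\theta$ (legitimate, since that lemma is an algebraic identity in $a,b$ with $a+b=x$, $ab=1$), whereas the paper cites the adjacency spectrum of $P_n$ from \cite{Brouwer} and uses \eqref{mu=rho}.
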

\begin{proof}
	Due to $P_{i}\subset P_{i+1}$ and $\Delta(P_{i})\leq2$, then by Lemma \ref{largest root}(ii) we get $\mu(P_{i})<\mu(P_{i+1})<2$. Thereby, $\mu(P_{i})=\rho_A(P_{i})=2\cos\frac{\pi}{i+1}$ (see \cite{Brouwer}) and $\mu(C_{i})=\rho_{A}(P_{2i-1})=2\cos \frac{\pi}{2i}$ leading to the lemma.	
\end{proof}

We are now in the position to show $\Delta(G_{i})\leq 3$. Otherwise if $\Delta(G_{i})\geq4$ for infinitely many $i's$, then $S_{n_{i}} \subseteq G_{i}$ (see Fig. 3). By Lemma \ref{largest root}(ii) and Lemma \ref{Sn}, we get
$$\lim\limits_{i \to \infty}\mu(G_{i})\geq \lim\limits_{i \to \infty}\mu(S_{n_{i}})>\tau^{\frac{1}{2}}+\tau^{-\frac{1}{2}},$$ a contradiction.

If $\Delta(G_{i})=2$ for infinitely many $i's$, then $G_{i}$ is a path or a cycle. It follows from Lemma \ref{Pn Cn} that
$\lim\limits_{i \to \infty}\mu(G_i)=\lim\limits_{i \to \infty}\mu(P_{i})=\lim\limits_{i \to \infty}\mu(C_{i})=2.$

We now consider $\Delta(G_{i})=3$. In this event, we claim that the graph sequence $\{G_i | i \in \mathbb{N}\}$ must be a tree sequence. To verify it, we need the following lemmas.

\begin{center}
	\setlength{\unitlength}{1mm} \linethickness{1pt}
	\begin{picture}(100,34)
		\qbezier[5](15.5,29.8)(18,32)(20,32)
		\qbezier[7](14.5,22.5)(15,18)(21,17.7)
		\put(28,25){\circle*{1.5}}
		\put(19,23.5){$C_{g}$} \put(15.5,29.8){\circle*{1.5}}
		\put(14.5,22.5){\circle*{1.5}}
		\linethickness{0.6pt}
		\qbezier(15.5,29.8)(13,26)(14.5,22.5) \put(21,32){\circle*{1.5}}
		\put(21,17.7){\circle*{1.5}}
		\multiput(38,25)(10,0){4}{\circle*{1.5}}
		\qbezier(21,32)(28,31)(28,25)
		\qbezier(21,17.7)(28,20)(28,25)
		
		\multiput(48,25)(1.5,0){7}{\line(1,0){0.5}}
		\put(28,25){\line(3,0){22}}\put(56,25){\line(3,0){12}}
		
		\put(29,21){$v$}\put(39,21){$v_{_{1}}$}\put(49,21){$v_{_{2}}$}\put(59,21){$v_{_{t-1}}$}\put(69,21){$v_{_{t}}$}
		\put(12,5){{\bf Fig. 4:} The graph $H_{g,t}(g\geq3, t\geq1)$.}
	\end{picture}
\end{center}

\vspace{-6mm}

\begin{lem}\label{3.2}
	Let $H_{g,t}$ be the lollipop graph with the labelling of Fig. 4. Then
	$$\lim_{g \to \infty}\mu(H_{g,1})=\tau^{\frac{1}{2}}+\tau^{-\frac{1}{2}}.$$
\end{lem}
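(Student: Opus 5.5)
The plan is to pass to the path-tree and reduce the statement to a limit computation that Lemma \ref{k largest root equation} handles directly. Root the path-tree at the vertex $v$ of degree $3$ in $H_{g,1}$. A path starting at $v$ either goes to the pendant vertex $v_1$ and stops, or enters the cycle $C_g$ in one of the two directions. In the second case it is forced to continue around the cycle until just before returning to $v$, because all other cycle vertices have degree $2$ and $v$ cannot be revisited.

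Hence $T(H_{g,1},v)$ is the starlike tree with centre $v$ and three arms of lengths $1$, $g-1$ and $g-1$. In the notation of Lemma \ref{k largest root equation}, this is exactly $L_v(P_2;2P_{g-1})$, where $P_2$ is the edge $vv_1$. By Lemma \ref{largest root}(ii), $\mu(H_{g,1})=\rho_A(T(H_{g,1},v))=\mu(L_v(P_2;2P_{g-1}))$, using \eqref{mu=rho} for the tree. So it suffices to compute $\lim_{n\to\infty}\mu(L_v(P_2;2P_n))$.

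By Lemma \ref{k largest root equation}(i), this limit $r$ exists and $r\ge 2$. To apply part (ii), I must show $r>2$, and I expect this to be the one delicate point. I would argue that for some fixed $n_0$ the tree $L_v(P_2;2P_{n_0})$ has spectral radius strictly greater than $2$. Take $n_0=6$: the tree $T(1,6,6)$ properly contains $T(1,2,5)$, the extended Dynkin diagram $\tilde E_8$, whose spectral radius is exactly $2$, so Lemma \ref{largest root}(i) gives the strict inequality. Since the sequence $\mu(L_v(P_2;2P_n))$ is increasing, $r\ge \mu(L_v(P_2;2P_{6}))>2$.

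Next I apply Lemma \ref{k largest root equation}(ii) with $G=P_2$ and $k=2$, using $\mathscr{M}(P_2,x)=x^2-1$ and $\mathscr{M}(P_2-v,x)=x$. The limit $r$ is then the largest root of
$$\frac{x+\sqrt{x^2-4}}{4}(x^2-1)-x=0.$$
To solve it, substitute $x=a+a^{-1}$ with $a>1$, so that $\frac{x+\sqrt{x^2-4}}{2}=a$. The equation becomes $a(a^2+1+a^{-2})=2(a+a^{-1})$, which simplifies to $a^4-a^2-1=0$. Its only admissible solution is $a^2=\tau=\frac{1+\sqrt5}{2}$, and therefore $r=\tau^{\frac12}+\tau^{-\frac12}$. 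Since this is the unique root with $x>2$, it is the largest root, which proves the lemma.
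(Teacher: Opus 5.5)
Your proof is correct, but it takes a different route from the paper's.

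\textbf{The paper's route.} It works directly with the matching polynomial. Expanding by \eqref{0} at $v$ gives $\mathscr{M}(H_{g,1})=(x^2-1)\mathscr{M}(P_{g-1})-2x\mathscr{M}(P_{g-2})$. Monotonicity in $g$ comes from Lemma \ref{3.1}(i), and $r>2$ from the value at $g=4$. The equation $a(x^2-1)-2x=0$ is then reached by letting $\mathscr{M}(P_{g-1})/\mathscr{M}(P_{g-2})\to a$. The passage from limits of functions to limits of largest roots is asserted there ad hoc.

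\textbf{Your route.} You use Godsil's path-tree to get the exact identity $\mu(H_{g,1})=\mu(T_{1,g-1,g-1})$ for every $g$. At that point the lemma becomes literally the statement of Lemma \ref{T1ii}(i). You then prove it exactly as the paper later does, via Lemma \ref{k largest root equation}(ii) with $G=P_2$, $k=2$.

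\textbf{What each buys.} Your route gets monotonicity from plain subgraph inclusion of trees, so Lemma \ref{3.1} is not needed. The root-convergence step is delegated to an already established lemma rather than re-argued. It also makes transparent that Lemmas \ref{3.2} and \ref{T1ii}(i) are the same fact. Consistently with this, $\mathscr{M}(T_{1,g-1,g-1})=\mathscr{M}(P_{g-1})\mathscr{M}(H_{g,1})$. The paper's direct computation is more self-contained on the matching-polynomial side. It does not rely on the path-tree being easy to describe, which matters once the cycle structure is more complicated.

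\textbf{A minor remark.} Under your identification, the paper's base case $\mu(H_{4,1})=2$ is just $\rho_A(T_{1,3,3})=\rho_A(\tilde E_7)=2$. So $n_0=4$ would already give $r>2$. Your choice $\tilde E_8=T_{1,2,5}\subset T_{1,6,6}$ works equally well.
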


\begin{proof}
It follows from Lemma \ref{3.1}(i) that $\{\mu(H_{g,1}) | g \in \mathbb{N}\; \text{and}\; g \geq 3\}$ is a strictly increasing sequence. Due to $\mathscr{M}(H_{4,1},x)=2$ and Lemma \ref{largest root}(ii), then  $\lim\limits_{g \to \infty}\mu(H_{g,1})=r > 2$ exists.

For simplicity, let $H'=H_{g,1}$ and $v$ be the vertex of $C_{g}$ with $d_{H'}(v)=3$. In the light of \eqref{0},
\begin{align*}
	\mathscr{M}(H')&=x\mathscr{M}(H'-v)-\sum_{u\in N_{H'}(v)}\mathscr{M}(H'-v-u)\\
	&=x\mathscr{M}(P_{g-1})\mathscr{M}(P_{1})-2\mathscr{M}(P_{g-2})\mathscr{M}(P_{1})-\mathscr{M}(P_{g-1})\\[2mm]
	&=(x^2-1)\mathscr{M}(P_{g-1})-2x\mathscr{M}(P_{g-2})\\[2mm]
	&=\mathscr{M}(P_{g-2})\left[\frac{\mathscr{M}(P_{g-1})}{\mathscr{M}(P_{g-2})}(x^2-1)-2x\right].
\end{align*}	
From Lemma \ref{2.9}, we have, for $x>2$, that
\begin{align*}
	\lim_{g \to \infty}\left[\frac{\mathscr{M}(P_{g-1})}{\mathscr{M}(P_{g-2})}(x^2-1)-2x\right]&=a(x^2-1)-2x\\&
	=\frac{x+\sqrt{x^{2}-4}}{2}(x^2-1)-2x,
\end{align*}
which implies that $\lim\limits_{g \to \infty}\mu(H_{g,1})=r$ is the largest positive root of the equation:
$$\frac{x+\sqrt{x^{2}-4}}{2}(x^2-1)-2x=0, \; {\mbox{i.e.},}\;x^{4}-4x^{2}-1=0.$$
Thus, $\lim\limits_{g \to \infty}\mu(H_{g,1})=\tau^{\frac{1}{2}}+\tau^{-\frac{1}{2}}.$	
\end{proof}

\begin{lem}\label{3.3}
	Let $H_{g,t}$ be the lollipop graph with the labelling of Fig. 4. Then for every $g\geq 3$,
	$$\lim_{t \to \infty}\mu(H_{g,t})\geq\tau^{\frac{1}{2}}+\tau^{-\frac{1}{2}}.$$
\end{lem}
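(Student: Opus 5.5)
The plan is to replace $H_{g,t}$ by one of its path-trees and compare that tree with a family of trees whose limit we can compute. Rooting at the far end of the tail turns $H_{g,t}$ into a tree with a single vertex of degree $3$, so Lemma \ref{largest root} reduces the problem to spectral radii of trees.

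\emph{Step 1: the path-tree.} Take $T=T(H_{g,t},v_t)$. A path starting at $v_t$ must first run along the tail $v_tv_{t-1}\cdots v_1v$. At $v$ it enters $C_g$ in one of two directions. In either direction it can continue through the remaining $g-1$ cycle vertices, one at a time, and it is forced to stop before returning to $v$. Hence $T$ is the starlike tree $T(t,g-1,g-1)$: a central vertex (the path $v_t\cdots v$) carrying pendant paths with $t$, $g-1$ and $g-1$ vertices. By Lemma \ref{largest root}(ii), $\mu(H_{g,t})=\rho_A(T(t,g-1,g-1))$.

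\emph{Step 2: comparison with a smaller family.} Since $g\ge 3$, we have $g-1\ge 2>1$. So the tree $T(1,2,t)$, with arms of $1$, $2$ and $t$ vertices, is a proper subgraph of $T(t,g-1,g-1)$. Lemma \ref{largest root}(i) then gives
$$\mu(H_{g,t})>\rho_A(T(1,2,t))=\mu(T(1,2,t)),$$
where the equality is \eqref{mu=rho}. Both sides increase in $t$, so their limits exist, and it suffices to show $\lim_{t\to\infty}\mu(T(1,2,t))=\tau^{\frac12}+\tau^{-\frac12}$. This is Hoffman's classical fact that the spectral radii of the graphs $E_n$ converge to $\sqrt{2+\sqrt5}$. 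I would re-derive it with Lemma \ref{k largest root equation} rather than cite it.

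\emph{Step 3: computing the limit.} Observe that $T(1,2,t)=L_v(G;1P_t)$, where $G=P_4=w\,v\,y\,z$ and $v$ is the second vertex. Here $\mathscr{M}(G)=x^4-3x^2+1$ and $\mathscr{M}(G-v)=x\,\mathscr{M}(P_2)=x^3-x$.
\begin{itemize}
\item First, the limit $r$ exceeds $2$. For $t\ge 3$ the tree $T(1,2,t)$ strictly contains $T(1,2,2)$ (the graph $E_6$, spectral radius $<2$). For large $t$ it contains a tree whose spectral radius exceeds $2$, namely $T(1,2,6)$ (the graph $E_{10}$). Hence $r>2$.
\item By Lemma \ref{k largest root equation}(ii), $r$ is then the largest root of $a(x^4-3x^2+1)=x^3-x$, where $a=\frac{x+\sqrt{x^2-4}}{2}$.
\item Isolate the square root: $\sqrt{x^2-4}\,(x^4-3x^2+1)=2x^3-2x-x(x^4-3x^2+1)=-x^5+5x^3-3x$.
\item Square and simplify, using the identity $x^2(x^4-3x^2+1)-(x^4-3x^2+1)(\ldots)$ type cancellations, which should collapse to a multiple of $x^4-4x^2-1$. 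Its largest root is $\sqrt{2+\sqrt5}=\tau^{\frac12}+\tau^{-\frac12}$.
\end{itemize}
Combining Steps 2 and 3 gives $\lim_{t\to\infty}\mu(H_{g,t})\ge\tau^{\frac12}+\tau^{-\frac12}$.

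\emph{Main obstacle.} The delicate point is the algebra in Step 3. Squaring can introduce spurious roots, so after factoring I must check the sign condition $-x^5+5x^3-3x\ge 0$ at the candidate root, and that no larger root of the squared equation satisfies it. If the factorization is messy, a fallback is to avoid the exact limit. Instead, find one fixed tree $T(1,2,t_0)$, or a fixed subgraph of $T(t,2,2)$, whose characteristic polynomial is negative at $\sqrt{2+\sqrt5}$, using the recursion \eqref{0}. However, that alone only bounds the limit from below by a value slightly under $\sqrt{2+\sqrt5}$. The exact limit computation via Lemma \ref{k largest root equation} is therefore the intended route.
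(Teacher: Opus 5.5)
\textbf{There is a genuine gap: the comparison family $T_{1,2,t}$ converges to a value strictly below $\tau^{\frac12}+\tau^{-\frac12}$.}

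Your Step 1 is correct: $T(H_{g,t},v_t)=T_{t,g-1,g-1}$, so $\mu(H_{g,t})=\rho_A(T_{t,g-1,g-1})$. The failure is in Steps 2--3, because $T_{1,2,t}$ is too small a family. Its limit is \emph{not} $\tau^{\frac12}+\tau^{-\frac12}$. The graphs $E_{t+4}=T_{1,2,t}$ converge to $\alpha_2$, the second limit point \emph{below} $\tau^{\frac12}+\tau^{-\frac12}$. This is the case $n=2$ of Theorem \ref{main}, i.e.\ of Hoffman's theorem, so the ``classical fact'' you quote is misremembered.

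Your own squaring shows it:
\[
(x^2-4)(x^4-3x^2+1)^2-x^2(x^4-5x^2+3)^2=4(x^6-5x^4+4x^2-1),
\]
and this is not a multiple of $x^4-4x^2-1$. It is cleaner to set $x=\lambda+\lambda^{-1}$ and $z=\lambda^2$. Multiplying $\lambda\mathscr{M}(P_4)=\mathscr{M}(P_1)\mathscr{M}(P_2)$ by $\lambda^2(\lambda-\lambda^{-1})$ gives $z^4-z^3-z^2+1=(z-1)(z^3-z-1)=0$. No squaring is involved here, so no sign check is needed. Hence $\lim_t\mu(T_{1,2,t})=\beta_2^{\frac12}+\beta_2^{-\frac12}\approx 2.0198$, where $\beta_2\approx1.3247$ is the plastic number. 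This is strictly below $\sqrt{2+\sqrt5}\approx 2.0582$. Your chain therefore only yields $\lim_t\mu(H_{g,t})\ge\alpha_2$, which does not prove the lemma.

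\textbf{Repair.} Use \emph{both} cycle arms. Since $g-1\ge 2$, you have $T_{2,2,t}\subseteq T_{t,g-1,g-1}$, with equality when $g=3$. Hence $\mu(H_{g,t})\ge\rho_A(T_{2,2,t})$.
\begin{itemize}
\item Write $T_{2,2,t}=L_v(P_5;P_t)$ with $v$ the centre of $P_5$.
\item The limit exceeds $2$, since $\rho_A(T_{2,2,2})=2$ and $T_{2,2,2}$ is a proper subgraph of $T_{2,2,3}$.
\item Lemma \ref{k largest root equation}(ii) gives $\lambda\mathscr{M}(P_5)=\mathscr{M}(P_2)^2$. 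Multiplying by $\lambda^4(\lambda-\lambda^{-1})^2$ gives $z^6-2z^5+2z^2-1=(z^2-z-1)(z-1)^2(z^2+z+1)=0$.
\item The largest root $z=\tau$ gives the limit $\tau^{\frac12}+\tau^{-\frac12}$; this is Lemma \ref{T1ii}(ii).
\end{itemize}

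With this substitution your route is a valid alternative to the paper's. The paper uses Lemma \ref{3.1}(i) (subdividing a Type A internal path increases $\mu$) to get monotonicity in $g$ and reduce to $g=3$. It then applies Lemma \ref{k largest root equation} directly to the non-tree $G=C_3$, obtaining $x^4-4x^2-1=0$. Your path-tree version replaces the monotonicity-in-$g$ step with a plain subgraph comparison of starlike trees. For $g=3$ both arguments are computing the limit for the same tree $T_{2,2,t}$.
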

\begin{proof}
	Since $H_{g,t} \subset H_{g,t+1}$, by Lemma \ref{largest root}(ii) we get $\mu(H_{g,t}) <\mu(H_{g,t+1})<2\sqrt{2}$. Easy to check $\mu (H_{3,2})=2$. Hence, $\lim\limits_{t \to \infty}\mu(H_{g,t})=r_{g} > 2$ exists by Lemma \ref{largest root}(ii) again. On the other hand, from Lemma \ref{3.1}(i) it follows that $\mu(H_{g+1,t})> \mu(H_{g,t})$ which indicates
	\begin{equation}\label{6}
		\lim_{t \to \infty}\mu(H_{g,t}) \geq \lim_{t \to \infty}\mu(H_{3,t}).
	\end{equation}
	We now consider the sequence $\{\mu(H_{3,t}) | t \in \mathbb{N}\}$. Let $v\in V(C_{3})$ with $d_{H_{3,t}}(v)=3$. In light of Lemma \ref{k largest root equation} (ii), $r_{3}$ is the largest root of the equation:
	\begin{equation}
		\frac{x+\sqrt{x^{2}-4}}{2}\mathscr{M}(C_{3},x)-\mathscr{M}(C_{3}-v,x)=0.\label{7}
	\end{equation}
	Substituting $\mathscr{M}(C_{3},x) = x^{3}-3x$ and $\mathscr{M}(C_{3}-v,x) = x^{2}-1$ into \eqref{7}, then	
	$$\frac{x+\sqrt{x^{2}-4}}{2}(x^{3}-3x)-(x^{2}-1)=0, \; {\mbox{i.e.},}\;x^{4}-4x^{2}-1=0.$$
	Thereby, $\lim\limits_{t \to \infty}\mu(H_{3,t})=\tau^{\frac{1}{2}}+\tau^{-\frac{1}{2}}$, and our result holds from \eqref{6}.
\end{proof}

To obtain this claim, we discuss  the girth $g_{i}$ of $G_{i}$. Let $\mathcal{S}=\{g_{i}\,\vert\, i\geq1\}$. If $g_{i}$ is unbounded, then each $G_{i}$ contains a cycle $C_{g_i} \subseteq G_i$ with $g_i> g_j$ for $i> j$. Thus, $H_{g_i,1} \subseteq G_i$. Along with Lemma \ref{3.2}, we get $\lim\limits_{i \to \infty}\mu(G_i)\geq \lim\limits_{i \to \infty}\mu(H_{g_i,1})=\tau^{\frac{1}{2}}+\tau^{-\frac{1}{2}},$ a contradiction. If $g_{i}$ is bounded and there are infinitely many $G_{i}$’s which are not trees, then each $G_{i}$ contains a cycle $C_{g}$ for some fixed $g\geq 3$. Consequently, $H_{g,k_{i}} \subseteq G_i$ with $k_{i}> k_{j}$ for $i>j$. In the light of Lemma \ref{3.3}, it turns out that $\lim\limits_{i \to \infty}\mu(G_i)\geq \lim\limits_{i \to \infty}\mu(H_{g,k_{i}})\geq\tau^{\frac{1}{2}}+\tau^{-\frac{1}{2}},$ a contradiction again.

As proved above, to determine the limit points of the largest matching roots of graphs smaller than $\tau^{\frac{1}{2}}+\tau^{-\frac{1}{2}}$, it is enough to analyse the tree sequence $\{G_i\, |\, i\in \mathbb{N}\}$ with maximum degree $\Delta(G_i) =3$. In the next section, we will describe the concrete structure of such trees.

\section{Restrictions on trees}

Let $T_{i,j,k}$ be a tree with exactly one vertex $u$ of maximum degree three such that $T_{i,j,k}-u=P_{i}\cup P_{j}\cup P_{k}$, which is shown in Fig. 5. In this section, we prove that the tree sequence $\{G_i\, |\, i\in \mathbb{N}\}$ mentioned in Section 2 will be $\{T_{1,n,m_{i}} |\, i\in \mathbb{N}\}$.

\begin{center}
	\setlength{\unitlength}{1mm} \linethickness{1pt}
	\begin{picture}(80,50)
		\multiput(10,20)(8,0){7}{\circle*{1.5}}
		\multiput(34,20)(0,8){4}{\circle*{1.5}} \linethickness{1pt}
		
		\linethickness{0.6pt}
		\put(10,20){\line(3,0){10}}\put(24,20){\line(3,0){20}}
		\put(48,20){\line(3,0){10}}\put(34,20){\line(0,3){10}}
		\put(34,34){\line(0,3){10}}
		\multiput(18,20)(1.5,0){6}{\line(1,0){0.5}}
		\multiput(42,20)(1.5,0){6}{\line(1,0){0.5}}
		\multiput(34,28)(0,1.5){6}{\line(0,1){0.5}}
		\put(8,16){${i}$} \put(14,16){${i-1}$}
		\put(25,16){${1}$} \put(33,16){${u}$}
		
		\put(41,16){${1}$} \put(46,16){${j-1}$}
		\put(58,16){${j}$}
		\put(36,27){${1}$}
		\put(36,35){${k-1}$} \put(36,43){${k}$}
		\put(8,6){{\bf Fig. 5:} The graph $T_{i,j,k}(i,j,k\geq1)$.}
	\end{picture}
\end{center}

\vspace{-6mm}

To achieve our goal, we prove the following preparatory lemmas.

\begin{lem}\label{exchange limpoint}
	Let $G_{1}$ and $G_{2}$ be two vertex-disjoint connected graphs with $u \in V(G_1)$ and $v \in V(G_2)$. $W_{uv}(G_{1},G_{2};P_{n})$ is the graph obtained from $G_{1}$, $G_{2}$ and a new path $P_{n}$: $v_{1}v_{2}\cdots v_{n-1}v_{n}$ by adding two new edges $uv_{1}$ and $vv_{n}$, respectively. Then
	\begin{equation*}
		\lim_{n \to \infty}\mu(W_{uv}(G_{1},G_{2};P_{n}))=\max \left\{\lim_{n \to \infty}\mu(L_{u}(G_{1};P_{n})),\lim_{n \to \infty}\mu(L_{v}(G_{2};P_{n}))\right\}.
	\end{equation*}
\end{lem}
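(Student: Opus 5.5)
The plan is to prove the two inequalities $\liminf_n \mu(W_n)\ge\max\{r_1,r_2\}$ and $\limsup_n\mu(W_n)\le\max\{r_1,r_2\}$. Here $W_n=W_{uv}(G_1,G_2;P_n)$, $r_1=\lim_n\mu(L_u(G_1;P_n))$ and $r_2=\lim_n\mu(L_v(G_2;P_n))$; both limits exist and are at least $2$ by Lemma \ref{k largest root equation}(i).

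\textbf{Lower bound.} For every $n$, $L_u(G_1;P_n)$ is a subgraph of $W_n$: keep $G_1$, the edge $uv_1$ and the path $v_1\cdots v_n$, and drop $G_2$. Likewise $L_v(G_2;P_n)\subseteq W_n$. Lemma \ref{largest root}(ii) gives $\mu(W_n)\ge\max\{\mu(L_u(G_1;P_n)),\mu(L_v(G_2;P_n))\}$, and letting $n\to\infty$ gives the lower bound.

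\textbf{Upper bound: polynomial identity.} Set $r=\max\{r_1,r_2\}$ and fix $\varepsilon>0$. It suffices to show that $\mathscr{M}(W_n,x)>0$ for all $x\ge r+\varepsilon$ once $n$ is large. Split the path at an edge $e=v_kv_{k+1}$ with $k\approx n/2$. Let $A_k=\mathscr{M}(L_u(G_1;P_k))$ and $B_j=\mathscr{M}(L_v(G_2;P_j))$. The edge recursion for matching polynomials, $\mathscr{M}(G)=\mathscr{M}(G-e)-\mathscr{M}(G-\{\text{ends of } e\})$, gives
$$\mathscr{M}(W_n)=A_kB_{n-k}-A_{k-1}B_{n-k-1}.$$
This edge recursion follows from (\ref{0}) and is standard in \cite{god-book}. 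As in the proof of Lemma \ref{k largest root equation}, expanding at $u$ gives $A_k=\mathscr{M}(P_k)\mathscr{M}(G_1)-\mathscr{M}(P_{k-1})\mathscr{M}(G_1-u)$. By Lemma \ref{2.9}, for $x>2$ this is
$$A_k=\frac{a^{k}}{\sqrt{x^2-4}}\Bigl(f_1(x)+O\bigl((b/a)^{k}\bigr)\Bigr),\qquad f_1(x)=a\,\mathscr{M}(G_1,x)-\mathscr{M}(G_1-u,x),$$
and similarly $B_j$ with $f_2(x)=a\,\mathscr{M}(G_2,x)-\mathscr{M}(G_2-v,x)$. Substituting,
$$\mathscr{M}(W_n)=\frac{a^{n}}{x^2-4}\Bigl(f_1f_2\,(1-a^{-2})+O\bigl((b/a)^{\min(k,n-k)}\bigr)\Bigr).$$

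\textbf{Upper bound: sign of $f_1f_2$.} I need $f_i>0$ on $[r+\varepsilon,\infty)$. By Lemma \ref{k largest root equation}(ii), when $r_i>2$ the largest root of $f_i$ is $r_i$. When $r_i=2$, $f_i$ has no root in $(2,\infty)$: otherwise the argument of that lemma would produce a limit larger than $2$. Also $f_i(x)\sim x^{|V(G_i)|+1}>0$ as $x\to\infty$. Hence $f_i>0$ on $(r_i,\infty)$, and so $f_1f_2(1-a^{-2})>0$ on $[r+\varepsilon,\infty)$, noting $a>1$ there.

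\textbf{Main obstacle: uniformity of the error term.} On the compact interval $I=[r+\varepsilon,\,2\sqrt{\Delta+1}]$ the quantity $f_1f_2(1-a^{-2})$ is bounded below by a positive constant. Here $\Delta=\max(\Delta(G_1),\Delta(G_2))$, and by Lemma \ref{largest root}(ii) no root of $\mathscr{M}(W_n)$ can exceed $2\sqrt{\Delta+1}$. On $I$ we have $b/a\le\theta<1$ uniformly, since $x\ge r+\varepsilon>2$. The constants hidden in the $O(\cdot)$ are continuous in $x$ on $I$, because they come from the explicit formula of Lemma \ref{2.9}; this gives a uniform bound $C\theta^{\min(k,n-k)}$. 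So for $n$ large the bracket is positive on all of $I$, hence $\mu(W_n)<r+\varepsilon$ and $\limsup_n\mu(W_n)\le r$.

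\textbf{Conclusion.} Combining the two bounds shows that $\lim_n\mu(W_n)$ exists and equals $\max\{r_1,r_2\}$. Separately, monotonicity is also available: subdividing the internal path of Type B, all of whose edges are cut edges, decreases $\mu$ by Lemma \ref{3.1}(ii). This gives existence of the limit directly, but it is not needed.
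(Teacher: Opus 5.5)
Your argument is correct in outline but takes a different route from the paper's.

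\textbf{How the paper argues.} It first gets existence of $\lim_n\mu(W_{uv}(G_1,G_2;P_n))$ from monotonicity, with a case split. If $uv_1\cdots v_nv$ is not in an internal path, then $W_n\subset W_{n+1}$. If it is, subdivision lowers $\mu$ by Lemma~\ref{3.1}(ii). It then expands along $uv_1$, divides by $\mathscr{M}(P_{n-2})$, passes to the pointwise limit $f_1f_2$, and asserts that the limit of the largest roots is the largest root of $f_1f_2$. In the first case the equality is simply asserted.

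\textbf{How you argue, and what it buys.} You sandwich instead. The lower bound is immediate from $L_u(G_1;P_n),L_v(G_2;P_n)\subseteq W_n$. The upper bound comes from error terms that are uniform on $[r+\varepsilon,2\sqrt{\Delta+1}]$, with Heilmann--Lieb covering everything beyond. This gives existence of the limit for free and removes the case split and the dependence on Lemma~\ref{3.1}(ii). (Your closing monotonicity remark holds only when the connecting path lies in an internal path, which is why the paper needs two cases.) It also replaces the paper's unproved interchange of ``limit'' and ``largest root'' by an actual uniform estimate in the one direction that needs it.

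The middle split is harmless but unnecessary. Splitting at $uv_1$ gives the exact identity $\mathscr{M}(W_n)=\frac{a^{n-1}}{\sqrt{x^2-4}}\bigl(f_1f_2-(b/a)^{n-1}g_1g_2\bigr)$, where $g_1=b\,\mathscr{M}(G_1)-\mathscr{M}(G_1-u)$ and $g_2$ is defined analogously. This makes the uniformity step immediate.

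\textbf{The step to tighten: $f_i>0$ on $(r_i,\infty)$, notably when $r_i=2$.} You get $f_1\ge 0$ there for free. For $x>r_1$ every $A_k(x)>0$, and $\sqrt{x^2-4}\,a^{-k}A_k(x)=f_1(x)-(b/a)^k g_1(x)\to f_1(x)$. Hence a zero of $f_1$ in $(r_1,\infty)$ could only be of even order. That is exactly what an appeal to ``the argument of that lemma'' cannot exclude, since pointwise convergence detects only sign changes.

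Here is a fix that works for every $r_i\ge 2$ and makes Lemma~\ref{k largest root equation}(ii) unnecessary. For $x>r_1$ one has $x>\mu(G_1)$, because $G_1\subset L_u(G_1;P_1)$. So $f_1=a\,\mathscr{M}(G_1)\bigl(1-b\,\mathscr{M}(G_1-u)/\mathscr{M}(G_1)\bigr)$ with $a\,\mathscr{M}(G_1)>0$. Now take $T=T(G_1,u)$; by Godsil's path-tree identity, $\mathscr{M}(G_1-u,x)/\mathscr{M}(G_1,x)=\phi(T-u,x)/\phi(T,x)=\bigl((xI-A(T))^{-1}\bigr)_{uu}$. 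This and $b$ are both positive and strictly decreasing there. So the bracket is strictly increasing, and being $\ge 0$ on the open interval $(r_1,\infty)$, it is $>0$ there. Alternatively, use Hurwitz's theorem together with real-rootedness of the $A_k$. The paper's own proof relies on this same fact without comment.
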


\begin{proof}
	In view of Lemma \ref{k largest root equation}(i), both $\lim\limits_{n \to \infty}\mu(L_{u}(G_{1};P_{n}))$ and $\lim\limits_{n \to \infty}\mu(L_{v}(G_{2};P_{n}))$ exist. The following cases are to be considered:
	
	{\it Case 1.} $uv_{1}v_{2}\cdots v_{n}v$ is not in an internal path.
	
	Either $L_{u}(G_{1};P_{n})$ or $L_{v}(G_{2};P_{n})$ is a path. Thereby $W_{uv}(G_{1},G_{2};P_{n})\subset W_{uv}(G_{1},G_{2};P_{n+1})$, and so $\mu(W_{uv}(G_{1},G_{2};P_{n}))<\mu(W_{uv}(G_{1},G_{2};P_{n+1}))$ by Lemma \ref{largest root}(ii). Hence, we have
	\begin{equation*}
		\lim_{n \to \infty}\mu(W_{uv}(G_{1},G_{2};P_{n}))=\max \left\{\lim_{n \to \infty}\mu(L_{u}(G_{1};P_{n})),\lim_{n \to \infty}\mu(L_{v}(G_{2};P_{n}))\right\}.
	\end{equation*}
	
	{\it Case 2.} $uv_{1}v_{2}\cdots v_{n}v$ is in an internal path.
	
	Since $uv_{1}$ is a cut edge in $W_{uv}(G_{1},G_{2};P_{n})$, then $\mu(W_{uv}(G_{1},G_{2};P_{n}))\geq\mu(W_{uv}(G_{1},G_{2};P_{n+1}))$ from Lemma \ref{3.1}(ii). Therefore, we get $\lim\limits_{n \to \infty}\mu(W_{uv}(G_{1},G_{2};P_{n}))$ exists.
	
	Notice,  $P_{n}\subset L_{u}(G_{1};P_{n})\subset W_{uv}(G_{1},G_{2};P_{n})$ and $P_{n}\subset L_{v}(G_{2};P_{n})\subset W_{uv}(G_{1},G_{2};P_{n})$. If $\lim\limits_{n \to \infty}\mu(W_{uv}(G_{1},G_{2};P_{n}))=2$, by Lemma \ref{largest root}(ii)  we have
	\begin{equation*}
		\lim\limits_{n \to \infty}\mu(L_{u}(G_{1};P_{n}))=\lim\limits_{n \to \infty}\mu(L_{v}(G_{2};P_{n}))=\lim\limits_{n \to \infty}\mu(W_{uv}(G_{1},G_{2};P_{n}))=2,
	\end{equation*}
	and thus
	\begin{equation*}
		\lim_{n \to \infty}\mu(W_{uv}(G_{1},G_{2};P_{n}))=\max \left\{\lim_{n \to \infty}\mu(L_{u}(G_{1};P_{n})),\lim_{n \to \infty}\mu(L_{v}(G_{2};P_{n}))\right\}.
	\end{equation*}
	If $\lim\limits_{n \to \infty}\mu(W_{uv}(G_{1},G_{2};P_{n}))>2$, set $G = W_{uv}(G_{1},G_{2};P_{n})$. In the light of Equality \eqref{0},
	\begin{align*}
		\mathscr{M}(G)=&x\mathscr{M}(G-u)-\sum_{w\in N_{G}(u)}\mathscr{M}(G-u-w)\\[2mm]
		=&x\mathscr{M}(G_{1}-u)\mathscr{M}(L_{v}(G_{2};P_{n}))-\mathscr{M}(G_{1}-u)\mathscr{M}(L_{v}(G_{2};P_{n})-v_{1})\\[2mm]
		&-\sum_{w\in N_{G_{1}}(u)}\mathscr{M}(G_{1}-u-w)\mathscr{M}(L_{v}(G_{2};P_{n}))\\[2mm] =&\mathscr{M}(G_{1})\mathscr{M}(L_{v}(G_{2};P_{n}))-\mathscr{M}(G_{1}-u)\mathscr{M}(L_{v}(G_{2};P_{n-1}))\\[2mm]
		=&\mathscr{M}(G_{1})\left[\mathscr{M}(G_{2})\mathscr{M}(P_{n})-\mathscr{M}(G_{2}-v)\mathscr{M}(P_{n-1})\right]\\[2mm] &-\mathscr{M}(G_{1}-u)\left[\mathscr{M}(G_{2})\mathscr{M}(P_{n-1})-\mathscr{M}(G_{2}-v)\mathscr{M}(P_{n-2})\right]\\[2mm]
		=&\left[\mathscr{M}(G_{1})\left(\mathscr{M}(G_{2})\dfrac{\mathscr{M}(P_{n})}{\mathscr{M}(P_{n-2})}-\mathscr{M}(G_{2}-v)\dfrac{\mathscr{M}(P_{n-1})}{\mathscr{M}(P_{n-2})}\right)\right.\\[2mm]
		&\left.-\mathscr{M}(G_{1}-u)\left(\mathscr{M}(G_{2})\dfrac{\mathscr{M}(P_{n-1})}{\mathscr{M}(P_{n-2})}-\mathscr{M}(G_{2}-v)\right)\right]\mathscr{M}(P_{n-2})\\[2mm]	
		\triangleq& \mathscr{M}(P_{n-2})f_{n}(x).
	\end{align*}
	For $x>2$, from Lemma \ref{2.9} we have
	\begin{align*}
		\lim_{n \to \infty}f_{n}(x)=&\mathscr{M}(G_{1})\left(\mathscr{M}(G_{2})\lim_{n \to \infty}\dfrac{\mathscr{M}(P_{n})}{\mathscr{M}(P_{n-2})}-\mathscr{M}(G_{2}-v)\lim_{n \to \infty}\dfrac{\mathscr{M}(P_{n-1})}{\mathscr{M}(P_{n-2})}\right)\\[2mm]
		&-\mathscr{M}(G_{1}-u)\left(\mathscr{M}(G_{2})\lim_{n \to \infty}\dfrac{\mathscr{M}(P_{n-1})}{\mathscr{M}(P_{n-2})}-\mathscr{M}(G_{2}-v)\right)\\[2mm]
		=&\mathscr{M}(G_{1})\left(a^{2}\mathscr{M}(G_{2})-a\mathscr{M}(G_{2}-v)\right)-\mathscr{M}(G_{1}-u)\left(a\mathscr{M}(G_{2})-\mathscr{M}(G_{2}-v)\right)\\[2mm]
		=&\left(\frac{x+\sqrt{x^{2}-4}}{2}\mathscr{M}(G_{1})-\mathscr{M}(G_{1}-u)\right)
		\left(\frac{x+\sqrt{x^{2}-4}}{2}\mathscr{M}(G_{1})-\mathscr{M}(G_{2}-v)\right)\\[2mm]
		\triangleq& f(x).			
	\end{align*}
	Since $\lim\limits_{n \to \infty}\mu(G)>2$, we conclude that $\lim\limits_{n \to \infty}\mu(G)$ is the largest positive root of the equation $f(x)=0$. Along with Lemma \ref{k largest root equation}, we have
	\begin{equation*}
		\lim_{n \to \infty}\mu(W_{uv}(G_{1},G_{2};P_{n}))=\max \left\{\lim_{n \to \infty}\mu(L_{u}(G_{1};P_{n})),\lim_{n \to \infty}\mu(L_{v}(G_{2};P_{n}))\right\}.
	\end{equation*}
	This finishes our proof.
\end{proof}

\begin{lem}\label{T1ii}
Let $T_{i,j,k}$ be a tree defined in Fig. 5. Then
\begin{itemize}
\item[{\rm (i)}]
For the tree $T_{1,i,i}$, $\lim\limits_{i \to \infty}\mu(T_{1,i,i})=\tau^{\frac{1}{2}}+\tau^{-\frac{1}{2}}$.
\item[{\rm (ii)}]
For the tree $T_{2,2,i}$, $\lim\limits_{i \to \infty}\mu(T_{2,2,i})=\tau^{\frac{1}{2}}+\tau^{-\frac{1}{2}}$.	
\end{itemize}	
\end{lem}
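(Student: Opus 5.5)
The plan for both parts is to realise the trees as pendant-path extensions $L_{u}(G;P_{n})$ and apply Lemma \ref{k largest root equation} with $k=1$. The identity $a+a^{-1}=x$, where $a=\frac{x+\sqrt{x^{2}-4}}{2}$ (so $ab=1$, $a+b=x$), will be used to reduce the resulting equations to $x^{4}-4x^{2}-1=0$. Its largest root is $\sqrt{2+\sqrt5}=\tau^{\frac12}+\tau^{-\frac12}$.

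\emph{Part (ii).} First, $T_{2,2,i}=L_{u}(P_{5};P_{i})$ with $u$ the centre of $P_{5}$. The sequence is increasing by Lemma \ref{largest root}(ii). Also $T_{2,2,2}$ is the tree $\tilde E_{6}$ with spectral radius $2$, so $T_{2,2,3}$ already has $\mu>2$ and the limit $r$ exceeds $2$. By Lemma \ref{k largest root equation}(ii), $r$ is the largest root of $a\,\mathscr{M}(P_{5})=\mathscr{M}(P_{2})^{2}$. Here $\mathscr{M}(P_{5})=x(x^{2}-1)(x^{2}-3)$ and $\mathscr{M}(P_{2})=x^{2}-1$, so for $x>2$ this becomes $a=\frac{x^{2}-1}{x(x^{2}-3)}$. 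Writing $N=x^{2}-1$ and $D=x(x^{2}-3)$, the relation $a+a^{-1}=x$ gives $N^{2}+D^{2}=xND$. Expanding, this simplifies to $x^{4}-4x^{2}-1=0$. One must check that the root $\sqrt{2+\sqrt5}$ really satisfies the branch $a=N/D$ with $a>1$ rather than $b=N/D$. This holds because $N/D>1$ there, since $x^{2}-1>x^{3}-3x$ at $x^{2}=2+\sqrt5$.

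\emph{Part (i).} Both long arms grow here, so I would use a squeeze. For fixed $j$, put $s_{j}=\lim_{m\to\infty}\mu(T_{1,j,m})$, which exists by Lemma \ref{k largest root equation}(i). We have $T_{1,j,m}=L_{u}(P_{j+2};P_{m})$, where $u$ is the second vertex of $P_{j+2}$, so $P_{j+2}-u=P_{1}\cup P_{j}$. For $j$ large, $s_{j}>2$ (e.g.\ $T_{1,2,5}=\tilde E_{8}$ has radius $2$). So $s_{j}$ is the largest root of
$$a\,\mathscr{M}(P_{j+2},x)=x\,\mathscr{M}(P_{j},x).$$
The sequence $s_{j}$ is nondecreasing in $j$, since $T_{1,j,m}\subset T_{1,j+1,m}$. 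The squeeze then runs as follows.
\begin{itemize}
\item For $i\ge j$ we have $T_{1,j,i}\subseteq T_{1,i,i}$, so $\lim_{i}\mu(T_{1,i,i})\ge s_{j}$ for every $j$.
\item $T_{1,i,i}\subset T_{1,i,m}$ for $m>i$, so $\mu(T_{1,i,i})\le s_{i}\le\lim_{j}s_{j}$.
\end{itemize}
Hence $\lim_{i}\mu(T_{1,i,i})=\lim_{j}s_{j}=:s$, and $\lim_{i}\mu(T_{1,i,i})$ exists because $T_{1,i,i}\subset T_{1,i+1,i+1}$.

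To identify $s$, divide the defining equation by $\mathscr{M}(P_{j})$. By Lemma \ref{2.9}, $\mathscr{M}(P_{j+2})/\mathscr{M}(P_{j})\to a^{2}$ for $x>2$, so the limiting equation is $a^{3}=x=a+a^{-1}$. This gives $a^{4}-a^{2}-1=0$, hence $a^{2}=\tau$ and $x=\tau^{\frac12}+\tau^{-\frac12}$.

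The main obstacle is justifying that the largest roots $s_{j}$ converge to the largest root of the limiting equation. I would handle it concretely. Set $g_{j}(x)=a\,\mathscr{M}(P_{j+2})/\mathscr{M}(P_{j})-x$ on $x>2$. Using Lemma \ref{2.9}, write $g_{j}$ explicitly in terms of $q=b/a<1$. Then check that $g_{j}\to a^{3}-x$ locally uniformly on $(2,\infty)$, and that the limit changes sign transversally at $\sqrt{2+\sqrt5}$ with no larger zero. Monotonicity of $s_{j}$ together with $s_{j}<2\sqrt2$ then forces $s=\sqrt{2+\sqrt5}$. The same remark covers part (ii), where no limit in $j$ is needed.
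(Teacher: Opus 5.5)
Your proposal is correct. Part (ii) is the paper's argument: $T_{2,2,i}=L_u(P_5;P_i)$, then Lemma \ref{k largest root equation}(ii) with $k=1$, then reduction to $x^4-4x^2-1=0$. Your branch check is a small addition the paper skips. It is not strictly needed, since $r>2$ is already known to be a root in $(2,\infty)$ and $x^4-4x^2-1$ has only one root there.

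For part (i) you take a genuinely different route. The paper observes that $T_{1,i,i}=L_u(P_2;2P_i)$, with both long arms attached at $u$. It then applies Lemma \ref{k largest root equation}(ii) with $k=2$: the limit is the largest root of $\frac{a}{2}(x^2-1)=x$, which rationalises to $x^4-4x^2-1=0$ in one line.

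You use only the case $k=1$, writing $T_{1,j,m}=L_u(P_{j+2};P_m)$. This forces you to do two extra things:
\begin{itemize}
\item a squeeze to interchange the limits in $j$ and $m$;
\item an argument that the largest roots $s_j$ converge to the zero of the limiting function $a^3-x$.
\end{itemize}
Both are sound. The second simplifies: all $s_j$ with $j\ge 2$ lie in the compact set $[s_2,2\sqrt{2}]\subset(2,\infty)$. Local uniform convergence then gives that $a^3-x$ vanishes at $s$. Since that function has a unique zero on $(2,\infty)$, no transversality analysis is needed.

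The paper's route is much shorter. What yours buys is that your $s_j$ are exactly the numbers $\alpha_j$ of Theorem \ref{main}. So you prove directly that $\lim_i\mu(T_{1,i,i})=\lim_j\alpha_j=\tau^{\frac{1}{2}}+\tau^{-\frac{1}{2}}$. Section 4 relies on exactly this identity when it says $\lim\alpha_n=\tau^{\frac{1}{2}}+\tau^{-\frac{1}{2}}$ ``follows from'' part (i). The paper never writes out that interchange of limits, and your squeeze supplies it.
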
	
	
\begin{proof}
(i) Due to $T_{1,i,i} \subset T_{1,i+1,i+1}$ and $\mu(T_{1,3,3})=2$, it follows from Lemma \ref{largest root}(ii) that $2<\mu(T_{1,i,i})<\mu(T_{1,i+1,i+1})<2\sqrt{2}$ for $i>3$. Thus  $\lim\limits_{i \to \infty}\mu(T_{1,i,i})=r > 2$ exists. Due to Lemma \ref{k largest root equation}(ii), $r$ is the largest root of the equation:
$$\frac{x+\sqrt{x^{2}-4}}{4}\mathscr{M}(P_{2},x)-\mathscr{M}(P_{1},x)=0.$$
Since $\mathscr{M}(P_{2},x)=x^{2}-1$ and $\mathscr{M}(P_{1},x)=x$, then above equation is equivalent to
$$\frac{x+\sqrt{x^{2}-4}}{4}(x^{2}-1)-x=0, \; {\mbox{i.e.},}\;x^{4}-4x^{2}-1=0,$$
which implies $\lim\limits_{i \to \infty}\mu(T_{1,i,i})=\tau^{\frac{1}{2}}+\tau^{-\frac{1}{2}}$.

(ii) Since $T_{2,2,i} \subset T_{2,2,i+1}$ and $\mu(T_{2,2,2})=2$, by Lemma \ref{largest root}(ii) we have $2<\mu(T_{2,2,i})<\mu(T_{2,2,i+1})<2\sqrt{2}$ for $i>2$, and thus $\lim\limits_{i \to \infty}\mu(T_{2,2,i})=r > 2$. In view of (ii), $r$ is the largest root of the equation:
$$\frac{x+\sqrt{x^{2}-4}}{2}\mathscr{M}(P_{5},x)-\mathscr{M}(2P_{2},x)=0.$$
Easily to verify that $\mathscr{M}(P_{5},x)=x^5-4x^3+3x$ and $\mathscr{M}(2P_{2},x)=(x^2-1)^2$. Thereby, the previous equation is equivalent to

$$\frac{x+\sqrt{x^{2}-4}}{2}(x^5-4x^3+3x)-(x^2-1)^2=0, \; {\mbox{i.e.},}\;x^{4}-4x^{2}-1=0,$$
whose largest root is $\tau^{\frac{1}{2}}+\tau^{-\frac{1}{2}}$. Thus, (ii) follows.		
\end{proof}

Recall, the maximum degree of each tree in the sequence $\{G_i\, |\, i\in \mathbb{N}\}$ is 3. We next inspect the number of vertices with degree 3 in such a tree.  If $G_{i}$ has at least three vertices of degree three, then $R_{p_{i},q_{i}}\subseteq G_{i}$ (see Fig. 6). When $k_{i}$ is sufficiently large, it follows from Lemmas \ref{largest root}(ii) and \ref{3.1}(ii) that $\mu(G_{i})\geq \mu(R_{p_{i},q_{i}})\geq \mu(T_{1,k_{i},k_{i}})$. Along with Lemma \ref{T1ii}(i), we have $\lim\limits_{i \to \infty}\mu(G_{i})\geq \lim\limits_{i \to \infty}\mu(T_{1,k_{i},k_{i}})=\tau^{\frac{1}{2}}+\tau^{-\frac{1}{2}}$, a contradiction.

\begin{center}
	\setlength{\unitlength}{1mm} \linethickness{1pt}
	\begin{picture}(110,40)
		\multiput(55,25)(0,8){2}{\circle*{1.5}} \linethickness{1pt}
		\multiput(23,25)(8,0){9}{\circle*{1.5}}
		
		\multiput(15,20)(0,10){2}{\circle*{1.5}} \linethickness{1pt}
		\multiput(95,20)(0,10){2}{\circle*{1.5}} \linethickness{1pt}
		\linethickness{0.6pt}	
		
		\put(55,25){\line(0,1){8}}
		\put(37,25){\line(3,0){36}} \put(23,25){\line(3,0){10}}
		\put(77,25){\line(3,0){10}}
		\put(23,25){\line(-5,3){8}} \put(23,25){\line(-5,-3){8}}
		\put(87,25){\line(5,-3){8}} \put(87,25){\line(5,3){8}}
		\multiput(31,25)(1.5,0){8}{\line(1,0){0.5}}
		\multiput(71,25)(1.5,0){8}{\line(1,0){0.5}}
		
		\put(62,21){$u_{_1}$} \put(70,21){$u_{_2}$}
		\put(78,21){$u_{_{p-1}}$}\put(86,21){$u_{_p}$}
		
		\put(46,21){$v_{_1}$} \put(38,21){$v_{_2}$}
		\put(30,21){$v_{_{q-1}}$}\put(23,21){$v_{_q}$}
		\put(29,8){{\bf Fig. 6:} The graph $R_{p,q}$ $(p,q\geq1)$.}
	\end{picture}
\end{center}
	
 \vspace{-10mm}
	
If $G_{i}$ has two vertices of degree three and their distance is bounded, then $F_{s,r_{i}}\subseteq G_{i}$ (see Fig. 7) with $r_{i}>r_{j}$ for $i>j$, and thus $\mu(G_{i})\geq \mu(F_{s,r_{i}})\geq \mu(T_{1,r_{i},r_{i}})$ by Lemmas \ref{largest root}(ii) and \ref{3.1}(ii). Along with Lemma \ref{T1ii}(i), we have
$\lim\limits_{i \to \infty}\mu(G_{i})\geq \lim\limits_{i \to \infty}\mu(T_{1,r_{i},r_{i}})=\tau^{\frac{1}{2}}+\tau^{-\frac{1}{2}}$, a contradiction.

\begin{center}
	\setlength{\unitlength}{1mm} \linethickness{1pt}
	\begin{picture}(110,40)
		\multiput(15,20)(0,10){2}{\circle*{1.5}}
		\multiput(23,25)(9,0){2}{\circle*{1.5}}
		\multiput(63,25)(9,0){2}{\circle*{1.5}}\multiput(83,25)(9,0){2}{\circle*{1.5}}
		
		\multiput(45,25)(9,0){2}{\circle*{1.5}}
		\multiput(54,25)(0,8){2}{\circle*{1.5}} \linethickness{1pt}

		\put(15,20){\line(3,2){8}} \put(23,25){\line(-3,2){8}}
		\linethickness{0.6pt} \put(23,25){\line(1,0){11}}
		\multiput(33,25)(1.5,0){9}{\line(1,0){0.5}}
		\put(43,25){\line(1,0){31}}

		\put(22,21){$v_{_1}$} \put(31,21){$v_{_2}$}
		\put(43,21){$v_{_{s-1}}$}
		\put(53,21){$v_{_{s}}$}
		\multiput(75,25)(1.5,0){8}{\line(1,0){0.5}}
		\put(81,25){\line(1,0){11}}
		\put(54,25){\line(0,1){8}}
		\put(62,21){$u_{_1}$} \put(71,21){$u_{_2}$}
		\put(82,21){$u_{_{r-1}}$} \put(91,21){$u_{_{r}}$}
		
		\put(25,8){{\bf Fig. 7:} The graph $F_{s,r}$ $(s>1,r\geq1)$.}
	\end{picture}
\end{center}

\vspace{-10mm}

If $G_{i}$ has two vertices of degree three and their distance is unbounded, by Lemma \ref{exchange limpoint} we can obtain a tree sequence $\{T_{k,n,m_{i}}\}_{i=1}^{\infty}$ with $m_{i}>m_{j}$ for $i>j$ such that
$\lim\limits_{i \to \infty}\mu(T_{k,n,m_{i}})=\lim\limits_{i \to \infty}\mu(G_{i})$. Without loss of generality, set $m_i \geq n \geq k \geq 1$. If $k \geq 2$,
then $T_{2,2,m_{i}} \subseteq T_{k,n,m_i}$. Along with Lemma \ref{T1ii}(ii) we get $\lim\limits_{i \to \infty}\mu(G_{i})=\lim\limits_{i \to \infty}\mu(T_{k,n,m_{i}})\geq\lim\limits_{i
\to \infty}\mu(T_{2,2,m_{i}})=\tau^{\frac{1}{2}}+\tau^{-\frac{1}{2}},$ a contradiction. Thus, $k=1$, and so the desired tree sequence is
$\{T_{1,n,m_i} |\, i \in \mathbb{N}\}$ (see Fig. 8).
	
\begin{center}
	 \setlength{\unitlength}{1mm} \linethickness{1pt}
	\begin{picture}(80,35)
		\multiput(10,20)(8,0){7}{\circle*{1.5}}
		\multiput(34,20)(0,8){2}{\circle*{1.5}} \linethickness{1pt}
		\linethickness{0.6pt}
		
		\put(10,20){\line(3,0){10}}\put(24,20){\line(3,0){20}}
		\put(48,20){\line(3,0){10}}\put(34,20){\line(0,3){8}}	
		
		\multiput(18,20)(1.5,0){6}{\line(1,0){0.5}}
		\multiput(42,20)(1.5,0){6}{\line(1,0){0.5}}

		\put(36,28){$_{1}$}
		\put(9,17){$_{n}$} \put(16,17){$_{n-1}$}
		\put(25,17){${_{1}}$} \put(33,17){$_{u}$}
		
		\put(41,17){${_1}$} \put(48,17){${_{m_{i}-1}}$}
		\put(57,17){${_{m_{i}}}$}

		\put(15,6){{\bf Fig. 8:} The tree $T_{1,n,m_{i}}$.}
	\end{picture}
\end{center}

\vspace{-10mm}

\section{Completion of proof}	

As discussed in Section 3, the limit points that we seek are the numbers $\lim\limits_{i \to \infty}\mu(T_{1,n,m_{i}})=\alpha_{n}$. Due to Lemma \ref{largest root}(ii) and Lemma \ref{k largest root equation}, $\alpha_{n}$ is the largest root of the equation:	
\begin{equation*}
	\frac{x+\sqrt{x^{2}-4}}{2}\mathscr{M}(P_{n+2},x)-\mathscr{M}(P_{1},x)\mathscr{M}(P_{n},x)=0.
\end{equation*}
Let $x=\lambda +\frac{1}{\lambda}$ and $\lambda^{2}=z$. In view of Lemma \ref{2.9}, we get
\begin{align*}
	&\frac{x+\sqrt{x^{2}-4}}{2}\mathscr{M}(P_{n+2},x)-\mathscr{M}(P_{1},x)\mathscr{M}(P_{n},x)\\
	&=\frac{x+\sqrt{x^{2}-4}}{2\sqrt{x^{2}-4}}(a^{n+3}-b^{n+3})-\frac{x}{\sqrt{x^{2}-4x}}(a^{n+1}-b^{n+1})\\ &=\frac{\lambda^{2}}{\lambda^{2}-1}(\lambda^{n+3}-\dfrac{1}{\lambda^{n+3}})-\dfrac{\lambda^{2}+1}{\lambda^{2}-1}(\lambda^{n+1}-\dfrac{1}{\lambda^{n+1}})\\
	&=\frac{1}{\lambda^{n-1}}\left(\dfrac{\lambda^{2(n+2)}-\lambda^{2(n+1)}-\lambda^{2n}+1}{\lambda^{2}-1}\right)\\
	&=\frac{1}{\lambda^{n-1}}\left(\lambda^{2(n+1)}-\dfrac{\lambda^{2n}-1}{\lambda^{2}-1}\right)\\
	&=\frac{1}{\lambda^{n-1}}\left[\lambda^{2(n+1)}-(1+\lambda^{2}+\cdots+\lambda^{2(n-1)})\right]\\
	&=\frac{1}{\lambda^{n-1}}\left[z^{n+1}-(1+z+\cdots+z^{n-1})\right].
\end{align*}

Set $h_{n}(x):=x^{n+1}-(1+x+\cdots+x^{n-1})$. Since $h_{n}(x)-xh_{n-1}(x)=-1<0$ for $x\geq 0$, then $\beta_{n+1}>\beta_{n}$ where $\beta_{n}$ is the largest root of $h_{n}(x)$. Hence, the sequence $\{\alpha_{n}\}_{n=1}^{\infty}$ with $\alpha_{n}=\beta_{n}^{\frac{1}{2}}+\beta_{n}^{-\frac{1}{2}}$ is strictly increasing. It follows from Lemma \ref{T1ii}(i)  that $\lim\limits_{n \to \infty}\alpha_{n}=\tau^{\frac{1}{2}}+\tau^{-\frac{1}{2}}$. Together with Theorem \ref{ii}, this finishes our proof of Theorem \ref{main}. \hfill{$\Box$} \medskip

In this paper, we determine all limit points of the largest matching roots of graphs. Note that in Sections 2 we have shown that the graph sequence $\{G_i\, |\, i\in \mathbb{N}\}$ is composed of trees. To obtain the limit points of adjacency spectral radii of graphs less than $\tau^{\frac{1}{2}}+\tau^{-\frac{1}{2}}$, Hoffman \cite{Ho A J} utilized the same tree sequence as the one considered in this paper. Due to \eqref{mu=rho}, we can get the limit points of the largest matching roots of graphs less than $\tau^{\frac{1}{2}}+\tau^{-\frac{1}{2}}$ from Hoffman's result. For the completeness of this paper, we provide a detailed proof in Sections 3 and 4; moreover, the newly discovered properties enrich the theory of matching polynomials.

\section*{Data Availability}

There is no data associated with this article.

\section*{Declaration of Interest Statement}

The authors declare that they have no known competing financial interests or personal relationships that could have appeared to influence the work reported in this paper.

\section*{Acknowledgement}
Zhaoxi Li and Jianfeng Wang are supported by National Natural Science Foundation of China (No. 12371353).

\small{

}

\begin{thebibliography}{00}
	
	\bibitem{Aih}
	J. Aihara, A new definition of Dewar-type resonance energy, J. Amer. Chem. Soc. 98 (1976) 2750--2758.
	
	\bibitem{bal}
	K. Balasubramanian,  Matching Polynomial-Based Similarity Matrices and Descriptors for Isomers of Fullerenes, Inorganics 11 (2023) 335.
	
	\bibitem{Brouwer}
	A. E. Brouwer, W. H. Haemers, Spectra of Graphs, Springer Science $\&$ Business Media, 2011.
	
	\bibitem{bur-erey}
	M. Burnham, A. Erey,  On the roots of maximal matching polynomials, Discrete Math. 348 (2025) 114547.
	
	\bibitem{chen-yuan}
	H. Chen, Y. Yuan, Matching polynomials of path-trees of a complete bipartite graph, Discrete Appl. Math. 372 (2025) 173--179.
	
	
	\bibitem{book}
	D. Cvetkovi\'c, P. Rowlinson, S. Simi\'c, An Introduction to the Theory of Graph Spectra, London
	Math. Soc. Stud. Texts, vol. 75, Cambridge University Press, Cambridge, 2010.
	
	
	
	
	\bibitem{far}
	E.J. Farrell, An introduction to matching polynomials, J. Combin. Theory Ser. B 27 (1979) 75--86.
	
	
	\bibitem{C.D.Godsil2}
	
	C.D. Godsil, Matchings and walks in graphs, J. Graph Theory 5 (1981) 285--297.
	
	\bibitem{god-book}
	C.D. Godsil, Algebraic Combinatorics, Chapman and Hall, London, 1993.
	
	
	
	\bibitem{god-gut}
	C.D. Godsil, I. Gutman, On the matching polynomial of a graph, in: Algebraic Methods in Graph Theory, Vol. I, II (Szeged, 1978), in: Colloq. Math. Soc. J\'{a}nos Bolyai, 25, North-Holland, Amsterdam, New York, 1981, pp. 241--249.
	
	\bibitem{god-gut-JGT}
	C.D. Godsil, I. Gutman, On the theory of the matching polynomial, J. Graph Theory 5 (1981) 137--144.
	
	\bibitem{gru-kun}
	C. Gruber, H. Kunz, General properties of polymer systems, Commun. Math. Phys. 22 (1971) 133--161.
	
	
	
	\bibitem{chen-yuan1}
	M. Guo, H. Chen, The matching polynomial of the path-tree of a complete graph, Discrete Appl. Math. 359 (2024) 244--249.
	
	
	
	
	\bibitem{Gutman1}
	I. Gutman, Uniqueness of the matching polynomial, MATCH Commun. Math. Comput. Chem. 55 (2006) 351--358.
	
	
	
	\bibitem{gut-mil-tri1}
	I. Gutman, M. Milun, and N. Trinajsti\'c. Topological definition of delocalisation
	energy. MATCH Commun. Math. Comput. Chem. 1 (1975) 171--175.
	
	
	\bibitem{gut-mil-tri}
	I. Gutman, M. Milun, M.N. Trinajsti\'c, Non-parametric resonance energies of arbitrary conjugated systems, J. Amer. Chem. Soc. 99 (1977) 1692--1704.
	
	
	
	
	\bibitem{hei-lie}
	O.J. Heilmann, E.H. Lieb, Theory of monomer-dimer systems, Comm. Math. Phys. 25 (1972) 190--232.
	
	\bibitem{Ho A J}
	A.J. Hoffman, On limit points of spectral radii of non-negative symmetric integral matrices, Graph Theory and Appl. Lect. Notes Math. 303 (1972) 165--172.
	
	\bibitem{Ho and Sm}
	A.J. Hoffman, J.H. Smith, On the spectral radii of topologically equivalent graphs, in: M. Fiedler (Ed.), Recent Advances in Graph Theory, Academia Praha, 1975, pp. 273--281.
	
	
	\bibitem{jiang-poly}
	Z.L. Jiang, A. Polyanskii, Forbidden subgraphs of bounded spectral radius with applications to equiangular lines, Israel  J.  Math. 236 (2020) 393--421.
	
	\bibitem{jiang-tid-yao-zhang}
	Z.L. Jiang, J. Tidor, Y. Yao, S.T. Zhang, Y.F. Zhao, Equiangular lines with a fixed angle, Ann. Math. 194 (2021) 729--743.
	
	\bibitem{kun}
	H. Kunz, Location of the zeros of the partition function for some classical lattice
	systems, Phys. Lett. 32 (1970) 311--312.
	
	\bibitem{ku-chen}
	C.Y. Ku, W. Chen, An analogue of the Gallai–Edmonds Structure Theorem for non-zero
	roots of the matching polynomial, J. Combin. Theory Ser. B 100 (2010) 119--127.
	
	
	
	\bibitem{mar-spi-sri}
	A.W. Marcus, D.A. Spielman, N. Srivastava, Interlacing families I: Bipartite Ramanujan graphs of all degrees. Ann. Math. 182 (2015) 307--325.
	
	
	\bibitem{she}
	J.B. Shearer, On the distribution of the maximum eigenvalue of graphs, Linear Algebra Appl. 114/115 (1989) 17--20.
	
	\bibitem{shi-GP}
	Y.T. Shi, M. Dehmer, X.L. Li, I. Gutman, Graph Polynomials, Chapman $\&$ Hall New York, 2017.
	
	\bibitem{yan-yeh}
	W. Yan, Y.-N. Yeh, On the matching polynomial of subdivision graphs,  Discrete Appl. Math. 157 (2009) 195--200.
	
	\bibitem{3w-2b}
	J.F. Wang, J. Wang, M. Maurizio, F. Belardo, L.G. Wang, Developments on the Hoffman program of graph, Adv. Appl. Math. 169 (2025) 10291.
	
	
\end{thebibliography}
\end{document}